\newcommand{\Div}{\mbox{\rm Div\,}}
\newcommand{\divm}{\mbox{\rm div\,}}
\newcommand{\bu}{\mbox{\boldmath{$u$}}}
\newcommand{\bE}{\mbox{\boldmath{$E$}}}
\newcommand{\bv}{\mbox{\boldmath{$v$}}}
\newcommand{\bw}{\mbox{\boldmath{$w$}}}
\newcommand{\bx}{\mbox{\boldmath{$x$}}}
\newcommand{\fb}{\mbox{\boldmath{$f$}}}
\newcommand{\bsigma}{\mbox{\boldmath{$\sigma$}}}
\newcommand{\btau}{\mbox{\boldmath{$\tau$}}}
\newcommand{\bvarepsilon}{\mbox{\boldmath{$\varepsilon$}}}
\newcommand{\bnu}{\mbox{\boldmath{$\nu$}}}
\newcommand{\beeta}{\mbox{\boldmath{$\eta$}}}
\newcommand{\bzero}{\mbox{\boldmath{$0$}}}
\newcommand{\A}{\mathcal{A}}
\newcommand{\E}{\mathcal{E}}
\newcommand{\D}{\mathcal{D}}
\newcommand{\B}{\mathcal{B}}
\newcommand{\T}{\mathcal{T}}
\newcommand{\G}{\Gamma}
\newcommand{\Ge}{\mathcal{G}}
\newcommand{\Beta}{\mathbf{\beta}}
\newcommand{\RR}{{\if mm {\rm I}\mkern -3mu{\rm R}\else \leavevmode
\hbox{I}\kern -.17em\hbox{R} \fi}}
\newcommand{\bvar}{\mbox{\boldmath{$\varepsilon$}}}
\newcommand{\bM}{\mbox{\boldmath{$M$}}}
\newcommand{\bF}{\mbox{\boldmath{$F$}}}
\newtheorem{theorem}{Theorem}[section]
\newtheorem{problem}[theorem]{Problem}
\newtheorem{remark}[theorem]{Remark}
\newtheorem{corollary}[theorem]{Corollary}
\newtheorem{proposition}[theorem]{Proposition}
\newenvironment{proof}[1]{{{\bf Proof. #1}}~}{\hfill$\qed$\par}
\begin{document}

\begin{frontmatter}

\title{Analysis of a dynamic viscoelastic-viscoplastic piezoelectric contact problem}

\author[l1]{M. Campo}
\author[l2]{J.R. Fern\'andez\corauthref{c1}}
\author[l3]{\'A. Rodr\'{\i}guez-Ar\'os}
\author[l3]{J.M. Rodr\'{\i}guez}

\address[l1]{Centro Universitario de la Defensa, Escuela Naval Militar\\ Plaza de Espa\~na s/n,
36920 Mar\'{\i}n, Spain}

\address[l2]{Departamento de Matem\'atica Aplicada I, Universidade de Vigo\\
Escola de Enxe\~ner\'{\i}a de Telecomunicaci\'on, Campus As Lagoas
Marcosende s/n, 36310 Vigo, Spain}

\address[l3]{Departamento de M\'etodos Matem\'aticos e Representaci\'on\\ Universidade de A
Coru\~na, A Coru\~na, Spain}

\corauth[c1]{Corresponding author: Phone: +34986818746. Fax:
+34986812116. E-mail address: jose.fernandez@uvigo.es}

\begin{abstract}
In this paper, we study, from both variational and numerical points
of view, a dynamic contact problem between a
viscoelastic-viscoplastic piezoelectric body and a deformable
obstacle. The contact is modelled using the classical normal
compliance contact condition. The variational formulation is written
as a nonlinear ordinary differential equation for the stress field,
a nonlinear hyperbolic variational equation for the displacement
field and a linear variational equation for the electric potential
field. An existence and uniqueness result is proved using Gronwall's
lemma, adequate auxiliary problems and fixed-point arguments. Then,
fully discrete approximations are introduced using an Euler scheme
and the finite element method, for which some a priori error
estimates are derived, leading to the linear convergence of the
algorithm under suitable additional regularity conditions. Finally,
some two-dimensional numerical simulations are presented to show the
accuracy of the algorithm and the behaviour of the solution.

 {\bf Keywords}. Viscoelasticity, viscoplasticity,
piezoelectricity, existence and uniqueness, a priori error
estimates, nu\-me\-ri\-cal simulations.
\end{abstract}

\end{frontmatter}

\section{Introduction}

Dynamic contact problems for viscoelastic materials have been
studied in numerous publications. For instance, we could refer the
papers \cite{BN11, Cocou02, CR00, CS06, IN02, JE99, K97, KS01, LL09,
MO, MOS10} where these problems were considered assuming different
friction laws and types of contact (deformable and rigid obstacles
or bilateral contact, Coulomb's friction law, slip dependent
friction, etc). Moreover, the numerical approximation of these
problems were also done, including effects as the adhesion, the
piezoelectricity or the damage (see, e.g., \cite{Ahn08, AS09, BBHJ,
BFT08,CFHS05, CFKSV, CF13, DP15, FS_M2AN11}).

These viscoelastic materials have been utilized in many engineering
applications since they can be customized to meet a desired
performance while maintain low cost. An important issue concerning
such materials is that they may exhibit time-dependent and inelastic
deformations. The viscoelastic strain component consists of a
recoverable-reversible part (elastic strain) and a
re\-co\-ve\-ra\-ble-di\-ssi\-pative deformation part (inelastic
strain). When an inelastic strain is assumed to depend only on the
magnitude of the stress or strain, the term plastic strain is used.
When the plastic deformation also changes with time, like in the
viscous component of the viscoelastic part, the term viscoplastic
strain is used. Therefore, a combined viscoelastic-viscoplastic
cons\-titutive relationships should be considered.  Recently, new
models coupling both viscoplastic and viscoelastic effects have been
proposed (see, for instance, \cite{ASM14, BNQ07, CMV15, CXWWS13,
GC83, HL10, KM09, MSA14, MDD11}).

Piezoelectricity is the ability of certain cristals, like the quartz
(also ceramics (BaTiO3, KNbO3, LiNbO3, PZT-5A, etc.) and even the
human mandible or the human bone), to produce a voltage when they
are subjected to mechanical stress. This effect is characterized by
the coupling between the mechanical and the electrical properties of
the material. We note that this kind of materials appears usually in
the industry as switches in radiotronics, electroacoustics or
measuring equipments. Since the first studies by Toupin (\cite{T60,
T63}) and Mindlin (\cite{M68, M69}) a large number of papers have
been published dealing with related models (see, for instance,
\cite{I90, BY, MS91} and the references cited therein). During the
last ten years, numerous contact problems involving this
piezoelectric effect have been studied from the variational and
numerical points of view (see, i.e., \cite{BFO08, BFT08, BS09,
MOS10, LL09, LW06}).

In this paper, the contact is assumed to be with a deformable
obstacle and so, the classical normal compliance contact condition
is used (\cite{KMS, MO}). Moreover, a viscoelastic-viscoplastic
material is considered including, for the sake of generality in the
modelling, piezoelectric effects. Both variational and numerical
analyses are then performed, providing the existence of a unique
weak solution to the continuous problem and an a priori error
analysis for the fully discrete approximations. Finally, some
numerical simulations are presented in two-dimensional examples to
demonstrate the accuracy of the approximation and the behaviour of
the solution.

The outline of this paper is as follows. In Section \ref{section2},
we describe the ma\-the\-matical problem and derive its variational
formulation. An existence and uniqueness result is proved in Section
\ref{section3}. Then, fully discrete approximations are introduced
in Section \ref{section4} by using the finite element method for the
spatial approximation and an Euler scheme for the discretization of
the time derivatives. An error estimate result is proved from which
the linear convergence is deduced under suitable regularity
assumptions. Finally, in Section \ref{section5} some two-dimensional
numerical examples are shown to demonstrate the accuracy of the
algorithm and the behaviour of the solution.

\section{Mechanical problem and its variational formulation}\label{section2}\setcounter{equation}{0}

Denote by $\mathbb{S}^d$, $d=1,2,3$, the space of second order
symmetric tensors on $\mathbb{R}^d$ and by ``$\cdot$'' and
$\|\cdot\|$ the inner product and the Euclidean norms on
$\mathbb{R}^d$ and $\mathbb{S}^d$.

Let $\Omega\subset \mathbb{R}^d$ denote a domain occupied by a
viscoelastic-viscoplastic piezoelectric body with a Lipschitz
boundary $\Gamma=\partial \Omega$ decomposed into three
measu\-ra\-ble parts $\Gamma_D$, $\Gamma_F$, $\Gamma_C$,
 on one hand, and on two measurable parts  $\Gamma_A$ and $\Gamma_B$, on the other hand,
 such that $\hbox{meas }(\Gamma_D)>0$, $\hbox{meas }(\Gamma_A)>0$, and $\Gamma_C\subseteq\Gamma_B$.  Let $[0,T]$,
$T>0$, be the time interval of interest. The body is being acted
upon by a volume force with density $\fb_0$, it is clamped on
$\Gamma_D$ and surface tractions with density $\fb_F$ act on
$\Gamma_F$. Moreover, an electrical potential is pres\-cribed on
$\Gamma_A$ and electric charges are applied on $\Gamma_B$. Finally,
we assume that the body may come in contact with a deformable
obstacle on the boundary part $\Gamma_C$, which is located, in its
reference configuration, at a distance $s$, measured along the
outward unit normal vector
$\bnu=(\nu_i)_{i=1}^d$.

Let $\bx\in \Omega$ and $t\in [0,T]$ be the spatial and time
variables, respectively. In order to simplify the writing, some
times we do not indicate the dependence of various functions and
unknowns on $\bx$ and $t$. Moreover, a dot above a variable
represents its first derivative with respect to the time variable
and two dots indicate derivative of the second order.

Let $\bu=(u_i)_{i=1}^d\in \mathbb{R}^d$, $\varphi \in \mathbb{R}$,
$\bsigma=(\sigma_{ij})_{i,j=1}^d\in\mathbb{S}^d$ and
$\bvarepsilon(\bu)=(\varepsilon_{ij}(\bu))_{i,j=1}^d\in\mathbb{S}^d$
denote the displacements, the electric potential, the stress tensor
and the linearized strain tensor, respectively. We recall that
$$\varepsilon_{ij}(\bu)=\frac{1}{2} \left(\frac{\partial u_i}{\partial x_j}+\frac{\partial
u_j}{\partial x_i}\right), \quad i,j=1,\ldots,d.$$ The body is
assumed to be made of a viscoelastic-viscoplastic piezoelectric
material and it satisfies the following constitutive law (see, for
instance, \cite{DL,I90}),
\begin{equation}\label{const}
\begin{array}{l}
\displaystyle
\bsigma(\bx,t)=\A\bvarepsilon(\dot{\bu}(\bx,t))+\B\bvarepsilon(\bu(\bx,t))+\int_0^t
\Ge (\bsigma(\bx,s),\bvarepsilon(\bu(\bx,s))) \, ds \\
\qquad \qquad -\E^*{\bf E}(\varphi(\bx,t)),\end{array}
\end{equation}
where $\A=(a_{ijkl})$ and $\B=(b_{ijkl})$ are the fourth-order
viscous and elastic tensors, respectively, $\Ge$ is a viscoplastic
function whose properties will be detailed later, ${\bf
E}(\varphi)=(E_i(\varphi))_{i=1}^d$ represents the electric field
defined by
$$E_i(\varphi)=-\frac{\partial \varphi}{\partial x_i},\quad i=1,\ldots,d,$$
and $\E^*=(e_{ijk}^*)_{i,j,k=1}^d$ denotes the transpose of the
third-order piezoelectric tensor $\E=(e_{ijk})_{i,j,k=1}^d$. We
recall that
$$ e_{ijk}^*=e_{kij}, \quad \hbox{for all} \quad i,j,k=1,\ldots,d.$$

Following \cite{BY} the following constitutive law is satisfied for
the electric potential,
$$\D=\E\bvarepsilon(\bu)+\Beta {\bf E}(\varphi),$$
where $\D=(D_i)_{i=1}^d$ is the electric displacement field and
$\Beta=(\beta_{ij})_{i,j=1}^d$ is the electric permittivity tensor.

We turn now to describe the boundary conditions.

On the boundary part $\Gamma_D$ we assume that the body is clamped and thus the displacement
field vanishes there (and so $\bu=\bzero$ on $\Gamma_D\times (0,T)$). Moreover, since
the density of traction forces $\fb_F$ is applied  on the boundary part $\Gamma_F$,
it follows that $\bsigma \bnu=\fb_F$ on $\Gamma_F\times (0,T).$

The contact is assumed with a deformable obstacle and so, the
well-known normal compliance contact condition is employed for its
modeling (see \cite{KMS, MO}); that is, the normal stress
$\sigma_\nu=\bsigma\bnu\cdot \bnu$ on $\Gamma_C$ is given by
\[
 -\sigma_\nu = p(u_\nu-s),
\]
where $u_\nu= \bu \cdot \bnu $ denotes the normal displacement, in
such a way that, when $u_\nu>s$, the difference $u_\nu-s$ represents
the interpenetration of the body's asperities into those of the
obstacle. The normal compliance function $p$ is prescribed and it
satisfies $p(r) = 0$ for $r\leq 0$, since then there is no contact.
As an example, one may consider
\begin{equation*}\label{p}
p(r)= c_p \,r_+,
\end{equation*}
where $c_p>0$ represents a deformability constant (that is, it
denotes the stiffness of the obstacle), and $r_+={\rm
max}\,\{0,r\}.$ Formally, the classical Signorini non\-pe\-ne\-tra\-tion
conditions are obtained in the limit $c_p \to \infty$. We also assume
that the contact is frictionless, i.e. the tangential component of
the stress field, denoted by $\bsigma_\tau=\bsigma\bnu-
\sigma_\nu\bnu$, vanishes on the contact surface.

Let $\Omega$ be subject to a prescribed electric potential  on
$\Gamma_A$ and to a density of surface electric charges $q_F$ on
$\Gamma_B$, that is,
$$\begin{array}{l}
\varphi=\varphi_A \quad \hbox{on} \quad \G_A\times (0,T),\\
\D\cdot \bnu=q_F \quad \hbox{on} \quad \G_B\times (0,T).
\end{array}$$
For the sake of simplicity, we assume that no electric potential is
imposed on the boundary $\Gamma_A$ (i.e. $\varphi_A=0$), and that
$q_F=0$ on $\G_C$; that is, the foundation is supposed to be
insulator. We note that it is straightforward to extend the results
presented below to more general situations by decomposing $\Gamma$
in a different way and by introducing some modifications in the
analyses shown in the following sections.

The mechanical problem of the dynamic deformation of a
viscoelastic-visco\-plas\-tic piezoelectric body in contact with a
deformable obstacle is then written as follows.

{\bf Problem P}. {\it Find a displacement field
$\bu:\overline{\Omega}\times [0,T] \rightarrow \mathbb{R}^d$, a
stress field $\bsigma:\overline{\Omega}\times [0,T]\rightarrow
\mathbb{S}^d$, an electric potential field
$\varphi:\overline{\Omega}\times (0,T)\rightarrow \mathbb{R}$ and an
electric displacement field $\D:\overline{\Omega}\times
(0,T)\rightarrow \mathbb{R}^d$
 such that,
\begin{eqnarray}
&&\bsigma(\bx,t)=\A\bvarepsilon(\dot\bu(\bx,t))+\B\bvarepsilon(\bu(\bx,t))+\int_0^t
\Ge (\bsigma(\bx,s),\bvarepsilon(\bu(\bx,s))) \,
ds\nonumber\\
&& \qquad \qquad -\E^*{\bf E}(\varphi(\bx,t)) \quad
\hbox{for a.e.} \quad \bx\in\Omega,\, t\in (0,T), \label{eq1}\\
&&\D=\E\bvarepsilon(\bu)+\Beta {\bf E}(\varphi) \quad \hbox{in} \quad \Omega \times (0,T), \label{eq1b}\\
&&\rho \ddot{\bu}-\Div \bsigma= \fb_0 \quad \hbox{in} \quad \Omega\times (0,T), \label{eq2}\\
&&\divm \D =q_0 \quad \hbox{in} \quad \Omega\times (0,T), \label{eq2b}\\
&&\bu=\bzero \quad \hbox{on} \quad  \Gamma_D\times (0,T), \label{eq3}\\
&&\bsigma \bnu=\fb_F \quad \hbox{on}\quad \Gamma_F\times (0,T), \label{eq4}\\
&&\bsigma_\tau=\bzero,\quad -\sigma_\nu=p(u_\nu-s) \quad \hbox{on}\quad \Gamma_C\times (0,T), \label{eq5}\\
&&\varphi=0 \quad \hbox{on} \quad \G_A\times (0,T),\label{eq8}\\
&&\D\cdot \bnu=q_F \quad \hbox{on} \quad \G_B\times (0,T),\label{eq9}\\
&&\bu(0)=\bu_0, \quad \dot{\bu}(0)=\bv_0 \quad \hbox{in}\quad
\Omega. \label{eq7}
\end{eqnarray}}
Here, $\rho>0$ is the density of the material (which is assumed
constant for simpli\-ci\-ty), and $\bu_0$ and $\bv_0$ represent
initial conditions for the displacement and velocity fields,
respectively. $\fb_0$ is the density of the body forces acting in
$\Omega$ and $q_0$ is the vo\-lu\-me density of free electric
charges. Moreover, $\Div$ and $\divm$ represent the divergence
operators for tensor and vector-valued functions, respectively.

In order to obtain the variational formulation of Problem P, let us
denote by $H=[L^2(\Omega)]^d$ and define the variational spaces $V$,
$W$ and $Q$ as follows,
$$\begin{array}{l}
V=\{\bw \in [H^1(\Omega)]^d\, ; \, \bw=\bzero \quad \hbox{on} \quad \G_D\},\\
W=\{\psi\in H^1(\Omega)\, ;\, \psi=0 \quad \hbox{on} \quad \G_A\},\\
Q=\{\btau=(\tau_{ij})_{i,j=1}^d\in [L^2(\Omega)]^{d\times d} \,\,\,
; \,\,\, \tau_{ij}=\tau_{ji}, \, \; i,j=1,\ldots,d\}.
\end{array}$$

\begin{remark}\label{remark2.2}
We could assume $\rho$ to be more general. To do that we should
follow a standard procedure (see for example \cite[p.105]{SHSbook}).
If we assume
\begin{equation}\label{hip_rho}
\rho\in L^\infty(\Omega),\ \rho(\bx)\ge\rho^*>0\ {\rm a.e.}\
\bx\in\Omega,
\end{equation}
where $\rho^*$ is a constant, we shall use a modified inner product
in $H$, given by
$$
((\bu,\bv))_H=\int_\Omega\rho\bu\cdot\bv\,d\bx\quad
\forall\bu,\bv\in H.
$$
Let $|||\cdot|||_H$ denote the associated norm, i.e.,
$$
|||\bu|||_H^2=((\bu,\bu))_H^{\frac{1}{2}}\quad \forall\bu\in H.
$$
By (\ref{hip_rho}), the norm $|||\cdot|||_H$ is equivalent to the
usual $L^2$-norm.
\end{remark}

We will make the following assumptions on the problem data.

The viscosity tensor
$\A(\bx)=(a_{ijkl}(\bx))_{i,j,k,l=1}^d:\btau\in\mathbb{S}^d\rightarrow \A(\bx)(\btau)\in\mathbb{S}^d$
satisfies:
\begin{equation}
\begin{array}{l}
\mathrm{(a)\ } a_{ijkl}=a_{klij}=a_{jikl} \quad \hbox{for} \quad i,j,k,l=1,\ldots, d.\\
\mathrm{(b)\ } a_{ijkl}\in L^\infty(\Omega) \quad \hbox{for} \quad i,j,k,l=1,\ldots, d.\\
\mathrm{(c)\ \mathrm{T}h\mathrm{ere}\ exists\ }m_{\mathcal{A}}>0\mathrm{\ such \ that \ }
 \mathcal{A}(\bx)\btau\cdot \btau\geq m_{\mathcal{A}}\,\|\btau\|^2   \\
{}\qquad \forall \,\btau\in \mathbb{S}^d,\mathrm{\ a.e.\ }\bx\in \Omega.
\end{array}
\label{hip1}
\end{equation}

The elastic tensor
$\B(\bx)=(b_{ijkl}(\bx))_{i,j,k,l=1}^d:\btau\in\mathbb{S}^d\rightarrow \B(\bx)(\btau)\in\mathbb{S}^d$
satisfies:
\begin{equation}
\begin{array}{l}
\mathrm{(a)\ } b_{ijkl}=b_{klij}=b_{jikl} \quad \hbox{for} \quad i,j,k,l=1,\ldots, d.\\
\mathrm{(b)\ } b_{ijkl}\in L^\infty(\Omega) \quad \hbox{for} \quad i,j,k,l=1,\ldots, d.\\
\mathrm{(c)\ \mathrm{T}h\mathrm{ere}\ exists\
}m_{\mathcal{B}}>0\mathrm{\ such \ that \ }
 \mathcal{B}(\bx)\btau\cdot \btau\geq m_{\mathcal{B}}\,\|\btau\|^2   \\
{}\qquad \forall \,\btau\in \mathbb{S}^d,\mathrm{\ a.e.\ }\bx\in
\Omega.
\end{array}
\label{hip2}
\end{equation}

The piezoelectric tensor
$\E(\bx)=(e_{ijk}(\bx))_{i,j,k=1}^d:\btau\in\mathbb{S}^d\rightarrow
\E(\bx)(\btau)\in\mathbb{R}^d$ satisfies:
\begin{equation}
\begin{array}{l}
\mathrm{(a)\ } e_{ijk}=e_{ikj} \quad \hbox{for} \quad i,j,k=1,\ldots, d.\\
\mathrm{(b)\ } e_{ijk}\in L^\infty(\Omega) \quad \hbox{for} \quad
i,j,k=1,\ldots, d.
\end{array}
\label{hip3}
\end{equation}

The permittivity tensor
$\Beta(\bx)=(\beta_{ij}(\bx))_{i,j=1}^d:\bw\in\mathbb{R}^d\rightarrow
\Beta(\bx)(\bw)\in\mathbb{R}^d$ verifies:
\begin{equation}
\begin{array}{l}
\mathrm{(a)\ } \beta_{ij}=\beta_{ji}\quad \hbox{for} \quad i,j=1,\ldots, d.\\
\mathrm{(b)\ } \beta_{ij}\in L^\infty(\Omega) \quad \hbox{for} \quad i,j=1,\ldots, d.\\
\mathrm{(c)\ \mathrm{T}h\mathrm{ere}\ exists\ }m_{\Beta}>0\mathrm{\
such \ that \ }
\Beta(\bx)\bw\cdot \bw\geq m_{\Beta}\,\|\bw\|^2   \\
{}\qquad \forall \,\bw\in \mathbb{R}^d,\mathrm{\ a.e.\ }\bx\in
\Omega.
\end{array}
\label{hip4}
\end{equation}

The normal compliance function $p : \Gamma_C\times\mathbb{R}\longrightarrow\mathbb{R}^+$ satisfies:
\begin{equation}
\left. \label{hip5}  \begin{array}{ll}
 {\rm (a)\ There\ exists\ }L_{p}>0\ {\rm such\ that}\\
 {} \qquad |p (\bx,r_1)-p
(\bx,r_2)|\leq L_p \,|r_1-r_2|
  \quad\forall\,r_1,r_2\in \mathbb{R},   {\rm\ a.e.\ } \bx \in \Gamma_C.\\
{\rm (b)\  The\ mapping\ }\bx\mapsto p (\bx,r)\
  {\rm is\ Lebesgue\ measurable\ on\ }\Gamma_C,\\
\qquad \forall r\in\mathbb{R}.\\
{\rm (c)\  } (p(\bx,r_1)-p(\bx,r_2))\cdot(r_1-r_2)\geq 0
  \quad\forall\,r_1,r_2\in\mathbb{R},{\rm\ a.e.\ }\bx\in\Gamma_C.\\
{\rm (d)\   The\ mapping\ }\bx\mapsto p (\bx,r)=0\quad {\rm for\ all\ } r\le0.
\end{array}\right.
\end{equation}

The viscoplastic function $\Ge : \Omega\times
\mathbb{S}^d\times\mathbb{S}^d\rightarrow
\Ge(\bx)(\btau,\bvarepsilon)\in\mathbb{S}^d$ satisfies:
\begin{equation}
\begin{array}{rl}
&\hspace*{-0.7cm} \mathrm{(a)\ \mathrm{T}h\mathrm{ere}\ exists\
}L_{\mathcal{G}} >0\mathrm{\ such \ that \ } \\
&\hspace*{0cm}\left| \Ge \left(
\bx,\bsigma_1,\bvarepsilon_{1}\right) -\Ge \left(
\bx,\bsigma_2,\bvarepsilon_{2} \right) \right| \leq L_{\Ge }\left(
\left| \bvarepsilon_{1}-\bvarepsilon_{2}\right| +\left| \bsigma_1-\bsigma_2\right| \right)\\
&\hspace*{0cm}  \mbox{ for all }\bvarepsilon_{1},\bvarepsilon
_{2},\bsigma_1,\bsigma_2\in \mathbb{S}^{d},\quad \mbox{
a.e. }\ \bx\in \Omega.\\
&\hspace*{-0.7cm}\mathrm{(b)\ } \mbox{The function }\;\bx\rightarrow
\Ge
\left( \bx,\bsigma,\bvarepsilon \right) \mbox{ is measurable.}  \\
&\hspace*{-0.7cm}\mathrm{(c)\ } \hbox{The mapping  }\bx\rightarrow
\Ge \left( \bx ,\mathbf{0},\mathbf{0}\right) \hbox{  belongs to  }
Q.
\end{array}
\label{hip6}
\end{equation}
The following regularity is assumed on the density of volume forces
and tractions:
\begin{equation}
\begin{array}{l}
\fb_0\in C([0,T];H),\quad \fb_F\in C([0,T]; [L^2(\G_F)]^d),\\
q_0\in C([0,T];L^2(\Omega)),\quad q_F\in C([0,T]; L^2(\G_B)).
\end{array}
\label{hip7}
\end{equation}
Finally, we assume that the initial displacement and velocity
satisfy
\begin{equation}
 \bu_0,\bv_0\in V.
  \label{hip8}
\end{equation}
\begin{remark}
We can replace (\ref{hip8}) by asking a less restrictive condition
$\bu_0\in V,\,\bv_0\in H$.
\end{remark}

Moreover, we denote by $V'$ the dual space of $V$. We identify $H$ with its dual and consider the Gelfand triple
$$
V\subset H\subset V'.
$$
We use the notation $<\cdot,\cdot>_{V'\times V}$ to denote the
duality product and, in particular, we have
$$
<\bv,\bu>_{V'\times V}=(\bv,\bu)_H\quad\forall \bu\in V,\ \bv\in H.
$$

Using Riesz' theorem, from (\ref{hip7}) we can define the elements
$\fb(t)\in V'$ and  $q(t)\in W$ given by
$$\begin{array}{l}
\displaystyle \langle\fb(t),\bw\rangle_{V'\times V}=\int_\Omega \fb_0(t)\cdot \bw \, d\bx+
 \int_{\G_F} \fb_F(t)\cdot \bw\, d\Gamma \quad \forall \bw\in V,\\
 \displaystyle (q(t),\psi)_W=\int_\Omega q_0(t) \psi \, d\bx+ \int_{\G_B} q_F(t) \psi\, d\Gamma
\quad \forall \psi\in W,
 \end{array}$$
and then $\fb\in C([0,T];V')$ and $q\in C([0,T];W)$. Now, let us define the contact functional $j:V\times V \rightarrow \mathbb{R}$ by
\begin{equation*}
 j(\bu,\bv) =\displaystyle \int_{\Gamma_C} p(u_\nu-s)\, v_\nu\,d\Gamma \quad \forall \bu,\bv\in V,
\end{equation*}
where we let $v_\nu=\bv\cdot \bnu$ for all $\bv\in V$. Moreover,
from properties (\ref{hip5}) let us conclude that
\begin{equation}\label{2.16bis}
 j(\bu,\bw)-j(\bv,\bw)\le C\|\bu-\bv\|_V\|\bw\|_V\ \forall\,\bu,\bv,\bw\in V.
\end{equation}

Plugging (\ref{eq1}) into (\ref{eq2}), (\ref{eq1b}) into
(\ref{eq2b}) and using the previous boundary conditions, applying a
Green's formula we derive the following variational formulation of
Problem P, written in terms of the velocity field
$\bv(t)=\dot{\bu}(t)$ and the electric potential $\varphi(t)$.

\begin{problem}\label{PV}
Find a velocity field $\bv:[0,T] \rightarrow V$, a stress field
$\bsigma:[0,T]\rightarrow Q$ and an electric potential field
$\varphi:[0,T]\rightarrow W$ such that $\bv(0)=\bv_0$ and for a.e.
$t\in (0,T)$ and for all $\bw\in V$ and $\psi\in W$,
\begin{eqnarray}
&&\hspace*{-0.6cm}
\bsigma(t)=\A\bvarepsilon(\dot\bu(t))+\B\bvarepsilon(\bu(t))+\int_0^t
\Ge (\bsigma(s),\bvarepsilon(\bu(s))) \,
ds-\E^*{\bf E}(\varphi(t)),\label{var1}\\
&&\hspace*{-0.6cm} \displaystyle \nonumber \langle  \rho
\dot{\bv}(t),\bw\rangle_{V'\times V}
+\left(\A\bvarepsilon({\bv}(t))+\B\bvarepsilon(\bu(t))+\int_0^t \Ge (\bsigma(s),\bvarepsilon(\bu(s))) \, ds,\bvarepsilon(\bw)\right)_Q\\
&&\qquad \qquad + \left(\E^*{\bf
E}(\varphi(t)),\bvarepsilon(\bw)\right)_Q + j(\bu(t),\bw) =\langle
\fb(t),\bw\rangle_{V'\times V}, \label{var2}\\
&& \hspace*{-0.6cm} \label{var3} (\Beta \nabla \varphi(t),\nabla
\psi)_H-(\E\bvarepsilon(\bu(t)),\nabla \psi)_H= (q(t),\psi)_W,
\end{eqnarray}
where the displacement field $\bu(t)$ is given by
\begin{equation}
\bu(t)=\int_0^t \bv(s)\, ds + \bu_0.
\end{equation}
\end{problem}

\section{An existence and uniqueness result}\label{section3}\setcounter{equation}{0}


\begin{theorem}\label{teo_PV}
Assume (\ref{hip1})--(\ref{hip8}) hold. Then, there exists a unique solution $(\bu,\bsigma,\varphi)$ to Problem \ref{PV}. Moreover, the solution satisfies
\begin{align}
&\bu\in H^1(0,T;V)\cap C^1([0,T];H),\quad\ddot\bu\in L^2(0,T;V'),\label{reg_u}\\
&\bsigma\in L^2(0,T;Q),\ \Div\bsigma\in L^2(0,T;V'),\label{reg_sigma}\\
&\varphi\in C([0,T];W).\label{reg_phi}
\end{align}
\end{theorem}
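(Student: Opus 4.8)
The plan is to decouple the three equations, solve auxiliary linear problems, and close the loop with a fixed-point argument. Let me sketch the structure before grinding any estimates.

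The plan is to decouple relations (\ref{var1})--(\ref{var3}) and to solve Problem~\ref{PV} by a fixed-point argument carried on the ``memory'' field that collects the elastic, viscoplastic and piezoelectric contributions appearing in (\ref{var2}). Given $\beeta\in C([0,T];Q)$, intended to represent
\[
\beeta(t)=\B\bvarepsilon(\bu(t))+\int_0^t\Ge(\bsigma(s),\bvarepsilon(\bu(s)))\,ds+\E^*{\bf E}(\varphi(t)),
\]
I would first solve the intermediate problem obtained by inserting $\beeta$ into (\ref{var2}): find $\bv_{\beeta}$ with $\bv_{\beeta}(0)=\bv_0$ such that, for a.e.\ $t\in(0,T)$ and all $\bw\in V$,
\[
\langle\rho\dot\bv_{\beeta}(t),\bw\rangle_{V'\times V}+(\A\bvarepsilon(\bv_{\beeta}(t)),\bvarepsilon(\bw))_Q+(\beeta(t),\bvarepsilon(\bw))_Q+j(\bu_{\beeta}(t),\bw)=\langle\fb(t),\bw\rangle_{V'\times V},
\]
where $\bu_{\beeta}(t)=\bu_0+\int_0^t\bv_{\beeta}(s)\,ds$. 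By (\ref{hip1}) together with Korn's inequality the bilinear form $(\A\bvarepsilon(\cdot),\bvarepsilon(\cdot))_Q$ is $V$-coercive, and by (\ref{2.16bis}) the contact term $j(\bu_{\beeta}(\cdot),\bw)$ is a Lipschitz perturbation acting through $\bu_{\beeta}$; hence this is a first-order evolution equation with linear coercive principal part and Lipschitz lower-order dependence, for which I would obtain a unique $\bv_{\beeta}\in L^2(0,T;V)\cap C([0,T];H)$ with $\dot\bv_{\beeta}\in L^2(0,T;V')$ from a standard abstract existence theorem (with, if needed, an auxiliary Banach fixed point to absorb the $j$ nonlinearity).

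Next I would reconstruct the remaining unknowns from $\bu_{\beeta}$. For each $t$, (\ref{var3}) is a linear elliptic problem for $\varphi_{\beeta}(t)$ whose unique solvability in $W$ follows from the coercivity (\ref{hip4}) of $\Beta$ via the Lax--Milgram theorem, giving $\varphi_{\beeta}\in C([0,T];W)$ together with the Lipschitz bound $\|\varphi_{\beeta_1}(t)-\varphi_{\beeta_2}(t)\|_W\le C\|\bu_{\beeta_1}(t)-\bu_{\beeta_2}(t)\|_V$. Relation (\ref{var1}) is then a Volterra equation of the second kind for $\bsigma_{\beeta}$; since $\Ge$ is Lipschitz by (\ref{hip6}), a contraction (or Gronwall) argument yields a unique $\bsigma_{\beeta}\in C([0,T];Q)$ depending Lipschitz-continuously on $\bv_{\beeta},\bu_{\beeta},\varphi_{\beeta}$. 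I would then define the operator $\Lambda:C([0,T];Q)\to C([0,T];Q)$ by
\[
\Lambda\beeta(t)=\B\bvarepsilon(\bu_{\beeta}(t))+\int_0^t\Ge(\bsigma_{\beeta}(s),\bvarepsilon(\bu_{\beeta}(s)))\,ds+\E^*{\bf E}(\varphi_{\beeta}(t)),
\]
so that any fixed point $\beeta=\Lambda\beeta$ produces, through $(\bv_{\beeta},\bsigma_{\beeta},\varphi_{\beeta})$, a solution of Problem~\ref{PV}, and conversely.

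The crux is to show that $\Lambda$ admits a unique fixed point. For two inputs $\beeta_1,\beeta_2$ I would test the difference of the intermediate equations with $\bw=\bv_{\beeta_1}-\bv_{\beeta_2}$ and, using coercivity of $\A$, the Lipschitz bound (\ref{2.16bis}), and $\|\bu_{\beeta_1}(t)-\bu_{\beeta_2}(t)\|_V\le\int_0^t\|\bv_{\beeta_1}-\bv_{\beeta_2}\|_V\,ds$, derive by Gronwall's lemma an estimate controlling $\|\bv_{\beeta_1}(t)-\bv_{\beeta_2}(t)\|_H^2$ and $\int_0^t\|\bv_{\beeta_1}-\bv_{\beeta_2}\|_V^2\,ds$ by $\int_0^t\|\beeta_1-\beeta_2\|_Q^2\,ds$. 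Propagating this through the elliptic and Volterra estimates above leads to
\[
\|\Lambda\beeta_1(t)-\Lambda\beeta_2(t)\|_Q^2\le C\int_0^t\|\beeta_1(s)-\beeta_2(s)\|_Q^2\,ds,
\]
whence, iterating, $\|\Lambda^n\beeta_1-\Lambda^n\beeta_2\|_{C([0,T];Q)}\le \frac{(CT)^{n/2}}{\sqrt{n!}}\,\|\beeta_1-\beeta_2\|_{C([0,T];Q)}$, so that $\Lambda^n$ is a contraction for $n$ large and $\Lambda$ has a unique fixed point. I expect this coupled Gronwall estimate — keeping track of the mutual dependence of $\bv$, $\bsigma$ and $\varphi$ through the memory term — together with the solvability of the intermediate problem to be the main technical obstacles. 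The regularity (\ref{reg_u})--(\ref{reg_phi}) finally follows by collecting the regularity produced in each step, the bound $\Div\bsigma=\rho\ddot\bu-\fb_0\in L^2(0,T;V')$ being read off from (\ref{eq2}).
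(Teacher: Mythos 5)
Your proposal is correct in substance, but it organizes the argument differently from the paper, so a comparison is worthwhile. The paper uses \emph{two nested} fixed points: an outer operator $\Theta$ on $C([0,T];Q)$ acting on the viscoplastic \emph{integrand} $\bF$ (so that $\bM(t)=\int_0^t\bF(s)\,ds$ replaces the memory integral in the equation of motion), and, inside the solvability proof of the resulting intermediate problem, an inner operator $\Lambda$ on $L^2(0,T;V')$ that collects the elastic, piezoelectric \emph{and contact} terms, so that the innermost evolution problem is purely viscous and linear and the abstract result of \cite[p. 107]{SHSbook} applies directly; the stress is then given by the explicit formula (\ref{def_sigmaM}), no Volterra equation being needed. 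You instead run a \emph{single} outer fixed point on the combined memory field $\beeta=\B\bvarepsilon(\bu)+\int_0^t\Ge\,ds+\E^*{\bf E}(\varphi)$ in $C([0,T];Q)$, keep the contact functional $j(\bu_{\beeta}(t),\cdot)$ inside the intermediate evolution problem, and recover the stress from (\ref{var1}) as a Volterra equation of the second kind. Both routes close with the same kernel estimate $\|\Lambda\beeta_1(t)-\Lambda\beeta_2(t)\|^2\le C\int_0^t\|\beeta_1(s)-\beeta_2(s)\|^2ds$ and the iterated-contraction lemma, and your estimates do go through: the crucial structural point is that your $\Lambda$ involves the velocity only through $\bu_{\beeta}$ and through time integrals of $\bsigma_{\beeta}$, so the $L^2(0,T;V)$-control of velocity differences obtained by testing with $\bv_{\beeta_1}-\bv_{\beeta_2}$ (coercivity of $\A$, Korn, (\ref{2.16bis}), Young, Gronwall) suffices, and no pointwise-in-time $V$-bound on the velocity is ever needed.

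Two caveats, neither fatal. First, as you yourself flag, the intermediate problem with the nonlocal term $j(\bu_{\beeta}(t),\cdot)$ is \emph{not} covered by the standard abstract parabolic existence theorem, so the ``auxiliary Banach fixed point'' you mention parenthetically is in fact mandatory; once it is included, your scheme also consists of two nested fixed points, merely grouped differently from the paper's (you: elastic$+$viscoplastic$+$piezoelectric outside, contact inside; the paper: viscoplastic outside, elastic$+$piezoelectric$+$contact inside). What your grouping buys is a simpler inner problem and a uniform treatment of all memory-type terms; what the paper's buys is that the stress never needs to be solved for, only evaluated. Second, a small slip: since $\bv_{\beeta}$ lies only in $L^2(0,T;V)$, the stress $\bsigma_{\beeta}=\A\bvarepsilon(\bv_{\beeta})+\cdots$ belongs to $L^2(0,T;Q)$, not to $C([0,T];Q)$ as you assert; this is harmless — it matches (\ref{reg_sigma}), and $\bsigma_{\beeta}$ enters $\Lambda\beeta$ only under a time integral — but the Volterra contraction should be set in $L^2(0,T;Q)$.
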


The proof of Theorem \ref{teo_PV} will be carried in several steps.
First, let $\bM\in L^2(0,T;Q)$ and consider the auxiliary problem.

\begin{problem}\label{PVM}
Find a velocity field $\bv_M:[0,T]\to V$ and an electric field $\varphi_M:[0,T]\to W$ such that $\bv_M(0)=\bv_0$ and for a.e. $t\in (0,T)$ and for all $\bw\in V$ and $\psi\in W$,
\begin{align}
&<\rho\dot{\bv}_M(t),\bw>_{V'\times V}+\left(\A\bvar(\bv_M(t))+\B\bvar(\bu_M(t)),\bvar(\bw)\right)_Q\nonumber\\
&\qquad \qquad +(\E^*\nabla\varphi_M(t),\bvar(\bw))_Q+j(\bu_M(t),\bw) \nonumber\\%
&\qquad=<\fb(t),\bw>_{V'\times V}-(\bM(t),\bvar(\bw))_Q,\label{eq_vM}\\
&(\beta\nabla\varphi_M(t),\nabla\psi)_H-(\E\bvar(\bu_M(t)),\nabla\psi)_H=(q(t),\psi)_W,\label{eq_phiM}
\end{align}
where the displacement field $\bu_M(t)$ is given by
\begin{equation}\label{def_uM}
\bu_M(t)=\int_0^t\bv_M(s)\,ds+\bu_0.
\end{equation}
\end{problem}

\begin{theorem}\label{teo_PVM}
Assume (\ref{hip1})--(\ref{hip8}) hold. Then, there exists a unique solution $(\bv_M,\varphi_M)$ to Problem \ref{PVM}. Moreover, the following regularities hold:
\begin{align}
&\bu_M\in H^1(0,T;V)\cap C^1([0,T];H),\quad\ddot{\bu}_M\in L^2(0,T;V'),\label{reg_uM}\\
&\varphi_M\in C([0,T];W).\label{reg_phiM}
\end{align}
\end{theorem}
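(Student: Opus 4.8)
The plan is to decouple the system by first eliminating the electric potential pointwise in time and then reducing the remaining problem to a single nonlinear first-order evolution equation for the velocity, which I would solve by combining the classical theory for evolutionary equations having a strongly monotone Lipschitz principal part with a fixed-point argument. First I would treat (\ref{eq_phiM}) at each fixed $t$: for a given displacement $\bw^*\in V$, the bilinear form $(\Beta\nabla\cdot,\nabla\cdot)_H$ is continuous and, by (\ref{hip4}) together with the Friedrichs--Poincar\'e inequality on $W$ (available since $\hbox{meas }(\Gamma_A)>0$), it is $W$-elliptic. The Lax--Milgram lemma then yields a unique $\varphi^*\in W$ solving $(\Beta\nabla\varphi^*,\nabla\psi)_H=(\E\bvar(\bw^*),\nabla\psi)_H+(q(t),\psi)_W$ for all $\psi\in W$. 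This defines an affine solution operator $\bw^*\mapsto\varphi^*=\Phi_t(\bw^*)$, and subtracting two such identities shows it is Lipschitz continuous uniformly in $t$, i.e. $\|\Phi_t(\bw_1^*)-\Phi_t(\bw_2^*)\|_W\le C\|\bw_1^*-\bw_2^*\|_V$. Inserting $\varphi_M(t)=\Phi_t(\bu_M(t))$ into (\ref{eq_vM}) removes the electric unknown and leaves a closed equation for $\bv_M$ with $\bu_M(t)=\int_0^t\bv_M(s)\,ds+\bu_0$.

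Next I would cast the reduced equation abstractly. Define $A:V\to V'$ by $\langle A\bv,\bw\rangle_{V'\times V}=(\A\bvar(\bv),\bvar(\bw))_Q$; by (\ref{hip1}) and Korn's inequality, $A$ is linear, bounded and $V$-elliptic, hence strongly monotone and Lipschitz. The remaining contributions, namely the elastic term $(\B\bvar(\bu_M),\bvar(\bw))_Q$, the piezoelectric term $(\E^*\nabla\Phi_t(\bu_M),\bvar(\bw))_Q$, the contact functional $j(\bu_M,\bw)$ (Lipschitz in its first argument by (\ref{2.16bis})) and the data $\langle\fb,\bw\rangle_{V'\times V}-(\bM,\bvar(\bw))_Q$, are grouped into a single right-hand side $\bF(t,\bv_M)\in V'$. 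Because $\bu_M$ depends on $\bv_M$ only through the time integral, $\bF(t,\cdot)$ satisfies the causal Lipschitz bound $\|\bF(t,\bv_1)-\bF(t,\bv_2)\|_{V'}\le C\int_0^t\|\bv_1(s)-\bv_2(s)\|_V\,ds$.

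Then I would solve $\langle\rho\dot{\bv}_M(t),\bw\rangle_{V'\times V}+\langle A\bv_M(t),\bw\rangle_{V'\times V}=\langle\bF(t,\bv_M),\bw\rangle_{V'\times V}$ with $\bv_M(0)=\bv_0$. Freezing the argument of $\bF$ at a given $\bz\in L^2(0,T;V)$ produces a linear Cauchy problem whose unique solution $\bv\in L^2(0,T;V)\cap C([0,T];H)$ with $\dot{\bv}\in L^2(0,T;V')$ is furnished by the standard theory for first-order linear evolution equations with a coercive operator. This defines a map $\Lambda:\bz\mapsto\bv$; using the causal Lipschitz bound on $\bF$ together with an exponentially weighted norm (equivalently, Gronwall's lemma), I would show that some power of $\Lambda$ is a contraction, so that $\Lambda$ has a unique fixed point $\bv_M$, the desired velocity, and the same estimates give uniqueness for the full problem. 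The regularities (\ref{reg_uM})--(\ref{reg_phiM}) are then read off: the embedding $\{\bv_M\in L^2(0,T;V),\ \dot{\bv}_M\in L^2(0,T;V')\}\hookrightarrow C([0,T];H)$ gives $\bu_M\in H^1(0,T;V)\cap C^1([0,T];H)$ and $\ddot{\bu}_M=\dot{\bv}_M\in L^2(0,T;V')$, while continuity of $t\mapsto\bu_M(t)\in V$ composed with the Lipschitz operator $\Phi_t$ yields $\varphi_M\in C([0,T];W)$.

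I expect the main obstacle to be the fixed-point step, namely establishing a global-in-time contraction: the naive Lipschitz estimate only closes on a small time interval, and the device that makes the argument work on all of $[0,T]$ is precisely the memory structure $\bu_M=\int_0^t\bv_M(s)\,ds+\bu_0$, which turns the Lipschitz constant into a Volterra-type kernel amenable to the weighted-norm (Gronwall) argument.
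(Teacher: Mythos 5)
Your proposal is correct and follows essentially the same route as the paper: the electric potential is eliminated pointwise in time by Lax--Milgram (the paper's Problem \ref{PVM_phi_eta}), the displacement-dependent terms (elastic, piezoelectric, contact) are frozen so that the velocity solves a linear evolution equation whose coercive principal part is the viscosity operator (the paper's Problem \ref{PVM_u_eta}, solved by citing \cite[p.~107]{SHSbook}), and a Banach fixed-point argument with a Volterra-type/Gronwall estimate closes the loop. The only cosmetic difference is that your fixed-point unknown is the frozen velocity in $L^2(0,T;V)$, whereas the paper's operator $\Lambda$ acts on the frozen forcing functional $\beeta\in L^2(0,T;V')$; the estimates are the same in both formulations.
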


To show the proof of this theorem we have to proceed in several
steps as well. Let $\beeta\in L^2(0,T;V')$ be given and consider the
following additional auxiliary problem.

\begin{problem}\label{PVM_u_eta}
Find a velocity field $\bv_{M\eta}:[0,T]\to V$ such that
$\bv_{M\eta}(0)=\bv_0$ and for a.e. $t\in (0,T)$ and for all $\bw\in
V$,
\begin{equation}\label{eq_u_eta}
\begin{array}{l}
\hspace*{-1cm}<\rho \dot{\bv}_{M\eta}(t),\bw>_{V'\times
V}+(\A\bvar(\bv_{M\eta}(t)),\bvar(\bw))_Q\\
\qquad =<\fb(t)-\beeta(t)-\bM(t),\bw>_{V'\times V},
\end{array}
\end{equation}
where the displacement field $\bu_{M\eta}(t)$ is given by
\begin{equation}\label{3.9bis}
\bu_{M\eta}(t)=\int_0^t\bv_{M\eta}(s)\,ds+\bu_0.
\end{equation}
\end{problem}
\begin{remark}
Note that in the right hand-side of variational equation
(\ref{eq_u_eta}) we make {\em un abus de langage}, since actually we
are identifying $\bM(t)\in Q$ with the corresponding $\bM(t)\in V'$
such that $(\bM(t),\bvar(\bw))_Q=<\bM(t),\bw>_{V'\times V}$ for all
$\bw\in V$.
\end{remark}

Now, we can apply a result proved in \cite[p. 107]{SHSbook} which we
can reformulate here as follows.
\begin{proposition}\label{teo_PVM_u_eta}
Assume (\ref{hip1})--(\ref{hip8}) hold. Then, there exists a unique
solution to Problem \ref{PVM_u_eta} and it has the regularity
expressed in (\ref{reg_uM}).
\end{proposition}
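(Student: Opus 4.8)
The plan is to recognize Problem \ref{PVM_u_eta} as an abstract linear first-order evolution equation of the form $\rho\dot{\bv}(t)+A\bv(t)=\g(t)$ in $V'$, with $\bv(0)=\bv_0$, where $A:V\to V'$ is the operator associated with the bilinear form $a(\bu,\bw):=(\A\bvar(\bu),\bvar(\bw))_Q$ and the source is $\g=\fb-\beeta-\bM$, and then simply to verify the hypotheses of the abstract existence-uniqueness result quoted from \cite[p.107]{SHSbook}.

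First I would check the properties of $a(\cdot,\cdot)$. Bilinearity and symmetry are immediate from the symmetry of $\A$ in (\ref{hip1})(a); boundedness follows from (\ref{hip1})(b), giving $|a(\bu,\bw)|\le C\|\bu\|_V\|\bw\|_V$. The essential point is $V$-ellipticity: using the positive-definiteness (\ref{hip1})(c) one obtains $a(\bw,\bw)\ge m_\A\|\bvar(\bw)\|_Q^2$, and since $\hbox{meas}(\Gamma_D)>0$, Korn's inequality yields a constant $c_K>0$ with $\|\bvar(\bw)\|_Q\ge c_K\|\bw\|_V$ for all $\bw\in V$. Hence $a(\bw,\bw)\ge m_\A c_K^2\|\bw\|_V^2$, so $A$ is $V$-elliptic.

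Next I would confirm the regularity of the data in the Gelfand triple $V\subset H\subset V'$. By (\ref{hip7}) and the definition of $\fb$ we have $\fb\in C([0,T];V')\subset L^2(0,T;V')$; by hypothesis $\beeta\in L^2(0,T;V')$; and $\bM\in L^2(0,T;Q)$ is identified, as in the preceding remark, with an element of $L^2(0,T;V')$. Therefore $\g=\fb-\beeta-\bM\in L^2(0,T;V')$. Together with $\bv_0\in V\subset H$ from (\ref{hip8}), all hypotheses of the abstract theorem are met, and it provides a unique $\bv_{M\eta}$ with $\bv_{M\eta}\in L^2(0,T;V)$, $\dot{\bv}_{M\eta}\in L^2(0,T;V')$, hence $\bv_{M\eta}\in C([0,T];H)$ (the factor $\rho$ being absorbed, when needed, by the equivalent inner product of Remark \ref{remark2.2}).

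Finally, to recover (\ref{reg_uM}) I would use the defining relation (\ref{3.9bis}). Since $\dot{\bu}_{M\eta}=\bv_{M\eta}\in L^2(0,T;V)$ and $\bu_0\in V$, the primitive $\bu_{M\eta}$ lies in $H^1(0,T;V)\cap C([0,T];V)$; moreover $\dot{\bu}_{M\eta}=\bv_{M\eta}\in C([0,T];H)$ gives $\bu_{M\eta}\in C^1([0,T];H)$, and $\ddot{\bu}_{M\eta}=\dot{\bv}_{M\eta}\in L^2(0,T;V')$. I expect the only genuinely nontrivial step to be the $V$-ellipticity of $a$, which rests entirely on Korn's inequality and hence on the condition $\hbox{meas}(\Gamma_D)>0$; the rest is a routine verification of the hypotheses of the cited abstract theorem.
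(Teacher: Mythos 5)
Your proposal is correct and follows essentially the same route as the paper: the paper simply invokes the abstract existence--uniqueness result of \cite[p. 107]{SHSbook} (itself based on \cite[p. 140]{Barbu}) for the first-order evolution equation in $\bv_{M\eta}$, and your argument amounts to verifying the hypotheses of that same result (boundedness and $V$-ellipticity of the form associated with $\A$ via Korn's inequality, $L^2(0,T;V')$ regularity of $\fb-\beeta-\bM$, and $\bv_0\in V$) and then reading off (\ref{reg_uM}) from (\ref{3.9bis}). The only difference is that you make explicit the routine verifications the paper leaves implicit.
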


\begin{remark}
The result in \cite{SHSbook} is used in the framework of the study
of a viscoelastic dynamic contact problem,  based itself on an
abstract result found in \cite[p. 140]{Barbu}.
\end{remark}

We now consider the auxiliary problem for the electric part.

\begin{problem}\label{PVM_phi_eta}
Find an electric field $\varphi_{M\eta}:[0,T]\to W$ such that for a.e. $t\in (0,T)$ and for all $\psi\in W$,
\begin{equation}\label{eq_phi_eta}
(\beta\nabla\varphi_{M\eta}(t),\nabla\psi)_H=(\E\bvar(\bu_{M\eta}(t)),\nabla\psi)_H+(q(t),\psi)_W.
\end{equation}
\end{problem}

We have the following result.

\begin{proposition}\label{teo_PVM_phi_eta}
Assume (\ref{hip1})--(\ref{hip8}) hold. Then, there exists a unique
solution to Problem \ref{PVM_phi_eta} and it has the regularity
expressed in (\ref{reg_phiM}).
\end{proposition}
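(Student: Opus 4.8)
The plan is to read (\ref{eq_phi_eta}) as a family, parametrised by $t\in[0,T]$, of linear elliptic variational equations for the scalar field $\varphi_{M\eta}(t)$, each of which I solve by the Lax--Milgram theorem. To this end I introduce the bilinear form $a:W\times W\to\mathbb{R}$, $a(\chi,\psi)=(\Beta\nabla\chi,\nabla\psi)_H$. Its continuity is immediate from the boundedness (\ref{hip4})(b) of the permittivity tensor. For the $W$-ellipticity I combine the coercivity (\ref{hip4})(c), which gives $a(\psi,\psi)\ge m_{\Beta}\|\nabla\psi\|_H^2$, with a Poincar\'e-type inequality: since every $\psi\in W$ vanishes on $\Gamma_A$ and $\hbox{meas}(\Gamma_A)>0$, the seminorm $\|\nabla\psi\|_H$ is in fact a norm on $W$ equivalent to $\|\psi\|_W$. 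Thus $a(\psi,\psi)\ge \alpha\|\psi\|_W^2$ for some $\alpha>0$.

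Next I fix $t$ and verify that $L_t(\psi)=(\E\bvar(\bu_{M\eta}(t)),\nabla\psi)_H+(q(t),\psi)_W$ is a bounded linear functional on $W$. The first term is estimated using the boundedness (\ref{hip3})(b) of the piezoelectric tensor, so that $\|\E\bvar(\bu_{M\eta}(t))\|_H\le C\|\bu_{M\eta}(t)\|_V$, and the second by Cauchy--Schwarz since $q(t)\in W$. The Lax--Milgram theorem then yields, for each $t$, a unique $\varphi_{M\eta}(t)\in W$ satisfying (\ref{eq_phi_eta}); this establishes the existence and uniqueness asserted in the proposition.

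It remains to obtain the regularity (\ref{reg_phiM}), namely $\varphi_{M\eta}\in C([0,T];W)$. I would write (\ref{eq_phi_eta}) at two instants $t_1,t_2\in[0,T]$, subtract, and test with $\psi=\varphi_{M\eta}(t_1)-\varphi_{M\eta}(t_2)$; using the $W$-ellipticity on the left and the bounds above on the right leads to the Lipschitz-type estimate
\[
\|\varphi_{M\eta}(t_1)-\varphi_{M\eta}(t_2)\|_W\le C\big(\|\bu_{M\eta}(t_1)-\bu_{M\eta}(t_2)\|_V+\|q(t_1)-q(t_2)\|_W\big).
\]
By Proposition \ref{teo_PVM_u_eta} the field $\bu_{M\eta}$ has the regularity (\ref{reg_uM}), in particular $\bu_{M\eta}\in H^1(0,T;V)\subset C([0,T];V)$, and $q\in C([0,T];W)$ by (\ref{hip7}); hence the right-hand side tends to zero as $t_1\to t_2$, which gives the continuity and closes the argument. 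The whole proof is standard linear elliptic theory, and the only step deserving attention is the coercivity of $a$, which hinges on the hypothesis $\hbox{meas}(\Gamma_A)>0$ and the Poincar\'e inequality it provides on $W$.
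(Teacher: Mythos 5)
Your proof is correct and follows essentially the same route as the paper: the bilinear form $(\Beta\nabla\cdot,\nabla\cdot)_H$ is shown to be continuous and coercive on $W$ (the paper invoking (\ref{hip4}), you making the underlying Friedrichs--Poincar\'e step on $\Gamma_A$ explicit), Lax--Milgram gives a unique $\varphi_{M\eta}(t)$ for each $t$, and the time-continuity follows from that of $\bu_{M\eta}$ and $q$ --- where your Lipschitz-type estimate simply spells out what the paper dismisses as ``straightforward.''
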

\begin{proof}{}
We define a bilinear form $b(\cdot,\cdot):W\times W\to\RR$ such that
$$
b(\varphi,\psi)=(\beta\nabla\varphi,\nabla\psi)_H\quad
\forall\varphi,\psi\in W.
$$
We use (\ref{hip4}) to show that the bilinear form is continuous,
symmetric and coercive on $W$. Moreover, using  (\ref{eq_phi_eta})
and (\ref{hip3}), the Riesz' representation theorem allows us to
define an element $q_\eta:[0,T]\to W$ such that
$$
(q_\eta(t),\psi)_W=(\E\bvar(\bu_{M\eta}(t)),\nabla\psi)_H+(q(t),\psi)_W\quad
\forall \psi\in W.
$$
We apply the Lax-Milgram theorem to deduce that there exists a
unique element $\varphi_{M\eta}(t)$ such that
$$
b(\varphi_{M\eta}(t),\psi)=(q_\eta(t),\psi)_W\quad \forall\psi\in W.
$$
We conclude that $\varphi_{M\eta}(t)$ is the solution to variational
equation (\ref{eq_phi_eta}). Moreover, by using (\ref{hip7}) and the
regularity of $\bu_{M\eta}$ and $q$, we conclude straightforwardly
that $\varphi_{M\eta}\in C([0,T];W)$.
\end{proof}

Now, let $\Lambda\beeta(t)$ denote the element of $V'$ defined by
\begin{align}
&<\rho\Lambda\beeta(t),\bw>_{V'\times V}\nonumber\\%
&\qquad=(\B\bvar(\bu_{M\eta}(t)),\bvar(\bw))_Q%
+(\E^*\nabla\varphi_{M\eta}(t),\bvar(\bw))_Q+j(\bu_{M\eta}(t),\bw),\label{def_lambda_eta}
\end{align}
for all $\bw\in V$ and $t\in[0,T]$. We have the following result.

\begin{proposition}\label{teo_ptofijo_eta}
For $\beeta\in L^2(0,T;V')$ it follows that $\Lambda\beeta\in
C([0,T];V')$ and the operator $\Lambda:L^2(0,T;V')\to L^2(0,T;V')$
has a unique fixed point $\beeta^*$.
\end{proposition}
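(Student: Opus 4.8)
{}
The plan is to prove the continuity $\Lambda\beeta\in C([0,T];V')$ directly from the regularity of the auxiliary solutions, and then to obtain the unique fixed point by showing that a sufficiently high power $\Lambda^n$ is a contraction on $L^2(0,T;V')$, so that the Banach fixed point theorem applied to iterates yields a unique fixed point $\beeta^*$ of $\Lambda$ itself. For the continuity, Proposition \ref{teo_PVM_u_eta} gives $\bu_{M\eta}\in H^1(0,T;V)\hookrightarrow C([0,T];V)$ and Proposition \ref{teo_PVM_phi_eta} gives $\varphi_{M\eta}\in C([0,T];W)$. Since $\B$ and $\E^*$ are bounded, by (\ref{hip2})(b) and (\ref{hip3})(b), and $j$ satisfies (\ref{2.16bis}), each of the three terms on the right-hand side of (\ref{def_lambda_eta}) depends continuously on $t$ as an element of $V'$; hence $\Lambda\beeta\in C([0,T];V')$.

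The core of the argument is a stability estimate for the map $\beeta\mapsto\Lambda\beeta$. Given $\beeta_1,\beeta_2\in L^2(0,T;V')$, let $\bv_i,\bu_i,\varphi_i$ ($i=1,2$) denote the solutions of Problems \ref{PVM_u_eta} and \ref{PVM_phi_eta} associated with $\beeta_i$. Subtracting the two copies of (\ref{eq_u_eta}) and testing with $\bw=\bv_1(t)-\bv_2(t)$, the data terms $\fb$ and $\bM$ cancel and only $\beeta_1-\beeta_2$ survives on the right. Using coercivity (\ref{hip1})(c) together with Korn's inequality to bound the viscous term from below, the equality $\bv_1(0)=\bv_2(0)=\bv_0$ to discard the kinetic term after integrating in time, and Young's inequality on the right, I would arrive at
\[ \int_0^t\|\bv_1(s)-\bv_2(s)\|_V^2\,ds\le c\int_0^t\|\beeta_1(s)-\beeta_2(s)\|_{V'}^2\,ds. \]

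From (\ref{3.9bis}) and the Cauchy--Schwarz inequality, $\|\bu_1(t)-\bu_2(t)\|_V^2\le t\int_0^t\|\bv_1(s)-\bv_2(s)\|_V^2\,ds$. Subtracting the two copies of (\ref{eq_phi_eta}), testing with $\psi=\varphi_1(t)-\varphi_2(t)$ and using coercivity (\ref{hip4})(c) and the boundedness of $\E$ gives $\|\varphi_1(t)-\varphi_2(t)\|_W\le c\,\|\bu_1(t)-\bu_2(t)\|_V$. Substituting the difference of the solutions into (\ref{def_lambda_eta}) and bounding the resulting terms by the boundedness of $\B$ and $\E^*$ and by (\ref{2.16bis}), the three contributions are all controlled by $\|\bu_1(t)-\bu_2(t)\|_V$; combining with the two previous displays yields the Volterra-type estimate
\[ \|(\Lambda\beeta_1-\Lambda\beeta_2)(t)\|_{V'}^2\le c\int_0^t\|\beeta_1(s)-\beeta_2(s)\|_{V'}^2\,ds, \]
with $c>0$ independent of $t$.

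Finally, I would iterate this inequality. Setting $h_n(t)=\|(\Lambda^n\beeta_1-\Lambda^n\beeta_2)(t)\|_{V'}^2$, the last estimate reads $h_n(t)\le c\int_0^t h_{n-1}(s)\,ds$, and a straightforward induction, using Fubini's theorem to evaluate the iterated kernel, gives $h_n(t)\le c^n\int_0^t\frac{(t-s)^{n-1}}{(n-1)!}\,h_0(s)\,ds$, whence
\[ \|\Lambda^n\beeta_1-\Lambda^n\beeta_2\|_{L^2(0,T;V')}^2\le\frac{(cT)^n}{n!}\,\|\beeta_1-\beeta_2\|_{L^2(0,T;V')}^2. \]
Since $(cT)^n/n!\to0$ as $n\to\infty$, the power $\Lambda^n$ is a contraction for $n$ large enough, and therefore $\Lambda$ has a unique fixed point $\beeta^*\in L^2(0,T;V')$. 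The main obstacle is the derivation of the stability estimate: one must organize the energy argument so that the difference $\beeta_1-\beeta_2$ enters only through a time integral, since this Volterra structure is exactly what produces the factorial gain in the iterate and makes a power of $\Lambda$ contractive for any final time $T$.
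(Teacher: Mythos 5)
Your proposal is correct and follows essentially the same route as the paper's proof: the same energy estimate for the velocity differences obtained by subtracting the two copies of (\ref{eq_u_eta}) and testing with $\bv_1(t)-\bv_2(t)$, and the same chaining through $\|\bu_1(t)-\bu_2(t)\|_V$ and $\|\varphi_1(t)-\varphi_2(t)\|_W$ to reach the Volterra-type inequality $\|\Lambda\beeta_1(t)-\Lambda\beeta_2(t)\|_{V'}^2\le C\int_0^t\|\beeta_1(s)-\beeta_2(s)\|_{V'}^2\,ds$. The only difference is presentational: you spell out the iterated-kernel/factorial argument showing that some power $\Lambda^n$ is a contraction, whereas the paper invokes this as a standard consequence (citing Lemma 4.7 in \cite{SHSbook}) together with Banach's fixed point theorem.
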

\begin{proof}{}
The continuity of $\Lambda\beeta$ is a straightforward consequence
of the continuity of $\varphi_{M\eta}$ and $\bu_{M\eta}$. Let now
$\beeta_1,\beeta_2\in L^2(0,T;V')$ and $t\in[0,T]$. We use the
shorter notation $\bu_i=\bu_{M\eta_i}$, $\bv_i=\bv_{M\eta_i}$,
$\varphi_i=\varphi_{M\eta_i}$, for $i=1,2$. Then, taking
$\beeta=\beeta_i$ for $i=1,2$ successively in (\ref{def_lambda_eta})
and subtracting the resulting equations, we have, for all $\bw\in V$
and  $t\in (0,T),$
\begin{eqnarray*}
&&\hspace*{-0.7cm}<\rho\Lambda\beeta_1(t)-\rho\Lambda\beeta_2(t),\bw>_{V'\times V}=\left(\B(\bvar(\bu_1(t))-\bvar(\bu_2(t))),\bvar(\bw)\right)_Q\\
&&\qquad\qquad
+\left(\E^*\nabla(\varphi_1(t)-\varphi_2(t)),\bvar(\bw)\right)_Q+j(\bu_1(t),\bw)-j(\bu_2(t),\bw).
\end{eqnarray*}
By using (\ref{hip2}), (\ref{hip3}) and (\ref{2.16bis}), we find that
\begin{equation}\label{eq_des0}
\|\Lambda\beeta_1(t)-\Lambda\beeta_2(t)\|_{V'}\le C(\|\bu_1(t)-\bu_2(t)\|_V+\|\varphi_1(t)-\varphi_2(t)\|_W).
\end{equation}
Here and below, $C$ stands for a positive constant depending on data
whose specific value may change from place to place. On the other
hand, from (\ref{3.9bis}) we know that
\begin{equation}\label{deuav}
\|\bu_1(t)-\bu_2(t)\|_V\le\int_0^t\|\bv_1(s)-\bv_2(s)\|_V\,ds.
\end{equation}
Also, from (\ref{eq_phi_eta}) and using (\ref{hip3}) and
(\ref{hip4}) we deduce
$$
\|\varphi_1(t)-\varphi_2(t)\|_W\le C\|\bu_1(t)-\bu_2(t)\|_V,
$$
which, combined with (\ref{deuav}), gives
\begin{equation}\label{dephiau}
\|\varphi_1(t)-\varphi_2(t)\|_W\le C\int_0^t\|\bv_1(s)-\bu_2(s)\|_V\,ds.
\end{equation}
Thus, by using consecutively (\ref{dephiau}) and (\ref{deuav}) in (\ref{eq_des0}), we obtain
$$
\|\Lambda\beeta_1(t)-\Lambda\beeta_2(t)\|_{V'}\le C\int_0^t\|\bv_1(s)-\bv_2(s)\|_V\,ds,
$$
which implies
\begin{equation}\label{eq_des1}
\|\Lambda\beeta_1(t)-\Lambda\beeta_2(t)\|_{V'}^2\le C\int_0^t\|\bv_1(s)-\bv_2(s)\|_V^2\,ds.
\end{equation}
Taking $\beeta=\beeta_i$ for $i=1,2$ successively in
(\ref{eq_u_eta}) with $\bw=\bv_1(t)-\bv_2(t)$ and subtracting the
resulting expressions, we find
\begin{align*}
&\hspace*{-1cm}<\rho\dot{\bv}_1(t)-\rho\dot{\bv}_2(t),\bv_1(t)-\bv_2(t)>_{V'\times V}\\
&\qquad\qquad \quad +(\A(\bvar(\bv_1(t))-\bvar(\bv_2(t))),\bvar(\bv_1(t))-\bvar(\bv_2(t)))_Q\\%
&\qquad\quad=<\beeta_2(t)-\beeta_1(t),\bv_1(t)-\bv_2(t)>_{V'\times
V}.
\end{align*}
By integrating in time, using the ellipticity of $\A$, the fact that
$\bv_1(0)=\bv_2(0)=\bv_0$ and Korn's inequality, we find
\begin{align*}
&m_\A\int_0^t\|\bv_1(s)-\bv_2(s)\|_V^2ds\le\int_0^t<\beeta_2(s)-\beeta_1(s),\bv_1(s)-\bv_2(s)>_{V'\times V}ds\\%
&\qquad\le\frac{1}{m_\A}\int_0^t\|\beeta_2(s)-\beeta_1(s)\|_{V'}^2ds%
+\frac{m_\A}{4}\int_0^t\|\bv_1(s)-\bv_2(s)\|_V^2ds,
\end{align*}
where we used several times Young's inequality
\begin{equation}\label{Young}
ab\leq \epsilon a^2+\frac{1}{4\epsilon}b^2,\, a,b,\epsilon\in
\mathbb{R},\,\epsilon>0.
\end{equation}

 Plugging this into (\ref{eq_des1}) we find that
\begin{equation*}
\|\Lambda\beeta_1(t)-\Lambda\beeta_2(t)\|_{V'}^2\le
C\int_0^t\|\beeta_1(s)-\beeta_2(s)\|_{V'}^2\,ds,
\end{equation*}
and using a standard argument (see, for example, Lemma 4.7 in
\cite{SHSbook}), from the previous inequality and using the Banach's
fixed point theorem we conclude that there exists a unique
$\beeta^*\in L^2(0,T;V')$ such that $\Lambda\beeta*=\beeta*$.
\end{proof}

We can now give the proof of Theorem \ref{teo_PVM}.

\begin{proof}{[of Theorem \ref{teo_PVM}]}
By using Proposition \ref{teo_ptofijo_eta} there exists a unique $\beeta^*\in L^2(0,T;V')$ such that
$\Lambda\beeta^*=\beeta^*$.  We define $\bu_M=\bu_{M\eta^*}$, $\bv_M=\bv_{M\eta^*}$ and $\varphi_M=\varphi_{M\eta^*}$.
By taking $\beeta=\beeta^*$ in (\ref{eq_phi_eta}) we obtain (\ref{eq_phiM}). Also, from (\ref{def_lambda_eta}) we get
$$
<\rho\beeta^*(t),\bw>_{V'\times V}=(\B\bvar(\bu_{M}(t)),\bvar(\bw))_Q%
+(\E^*\nabla\varphi_{M}(t),\bvar(\bw))_Q+j(\bu_{M}(t),\bw),
$$
for all $\bw\in V$ and $t\in[0,T]$. Therefore, by taking
$\beeta=\beeta^*$ in (\ref{eq_u_eta}) and using the previous
equality, we obtain the variational equation (\ref{eq_vM}). Finally,
(\ref{def_uM}) follows from (\ref{3.9bis}), and the regularities are
a consequence of the regularities given by Propositions
\ref{teo_PVM_u_eta} and \ref{teo_PVM_phi_eta}.
\end{proof}
Further, we define the operator $\Theta:C([0,T];Q)\to C([0,T];Q)$ by
\begin{equation*}
\Theta\bF=\Ge(\bsigma_M,\bvar(\bu_M)),\ {\rm where}\
\bM=\int_0^t\bF(s)\,ds\quad \forall\bF\in C([0,T];Q),
\end{equation*}
and $\bu_M$ is the displacement field solution to Problem \ref{PVM}
while $\bsigma_M$ is the stress field:
\begin{equation}\label{def_sigmaM}
\bsigma_M=\A\bvar(\bv_M)+\B\bvar(\bu_M)+\bM+\E^*\nabla\varphi_M,
\end{equation}
with $\varphi_M$ being the electric potential solution to Problem
\ref{PVM}. Note that since $\bM\in C([0,T];Q)$, it is
straightforward that $\bsigma_M\in C([0,T];Q)$. We obtain the
following result.

\begin{proposition}\label{teo_ptofijo_M}
The operator $\Theta$ has a unique fixed point $\bF^*\in
C([0,T];Q)$.
\end{proposition}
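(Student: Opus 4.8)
The plan is to show that a suitable power of $\Theta$ is a contraction on $C([0,T];Q)$ and then conclude with Banach's fixed point theorem, mirroring the argument already used for $\Lambda$ in Proposition \ref{teo_ptofijo_eta}. Fix $\bF_1,\bF_2\in C([0,T];Q)$ and put $\bM_i(t)=\int_0^t\bF_i(s)\,ds$ for $i=1,2$. Each $\bM_i\in C([0,T];Q)$, so Theorem \ref{teo_PVM} provides the corresponding fields $\bu_{M_i}$, $\bv_{M_i}$, $\varphi_{M_i}$ and the stresses $\bsigma_{M_i}$ defined by (\ref{def_sigmaM}). Writing $\bu=\bu_{M_1}-\bu_{M_2}$, $\bv=\bv_{M_1}-\bv_{M_2}$ and $\varphi=\varphi_{M_1}-\varphi_{M_2}$, and using the Lipschitz property (\ref{hip6}) of $\Ge$, I would first reduce the problem to a stress/displacement estimate, namely $\|\Theta\bF_1(t)-\Theta\bF_2(t)\|_Q\le L_\Ge\,(\|\bsigma_{M_1}(t)-\bsigma_{M_2}(t)\|_Q+\|\bu(t)\|_V)$.

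The central step is an energy estimate for the difference of the two instances of (\ref{eq_vM}). Subtracting them, testing with $\bw=\bv(t)$ and integrating over $(0,t)$, the coercivity (\ref{hip1}) of $\A$ and Korn's inequality control $\int_0^t\|\bv\|_V^2\,ds$, while the elastic and piezoelectric contributions, the normal-compliance term bounded through (\ref{2.16bis}), and the source $(\bM_1-\bM_2,\bvar(\bv))_Q$ are absorbed by Young's inequality (\ref{Young}). Exactly as in Proposition \ref{teo_ptofijo_eta}, equation (\ref{eq_phiM}) gives $\|\varphi(t)\|_W\le C\|\bu(t)\|_V$, and (\ref{def_uM}) gives $\|\bu(t)\|_V\le\int_0^t\|\bv(s)\|_V\,ds$. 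Inserting these and applying Gronwall's lemma should yield $\int_0^t\|\bv(s)\|_V^2\,ds\le C\int_0^t\|\bM_1(s)-\bM_2(s)\|_Q^2\,ds$, together with the same bound for $\|\bu(t)\|_V^2$ and $\|\varphi(t)\|_W^2$.

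Plugging these into (\ref{def_sigmaM}) controls the stress difference, and since $\bM_1(s)-\bM_2(s)=\int_0^s(\bF_1-\bF_2)\,dr$ one has $\|\bM_1(s)-\bM_2(s)\|_Q^2\le T\int_0^s\|\bF_1-\bF_2\|_Q^2\,dr$. Collecting the estimates leads to the key inequality
\[
\|\Theta\bF_1(t)-\Theta\bF_2(t)\|_Q^2\le C\int_0^t\|\bF_1(s)-\bF_2(s)\|_Q^2\,ds,
\]
which has precisely the form already met for $\Lambda$. Iterating it produces a factorial gain in the constants, so some power $\Theta^n$ is a contraction on $C([0,T];Q)$; the standard argument (Lemma 4.7 in \cite{SHSbook}) together with Banach's fixed point theorem then delivers the unique fixed point $\bF^*\in C([0,T];Q)$.

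The step I expect to be the main obstacle is the control of the stress difference, because $\bsigma_M$ contains the viscous term $\A\bvar(\bv_M)$ whereas the energy method only furnishes an $L^2$-in-time (and $C([0,T];H)$) bound on the velocity, not a pointwise-in-$t$ bound in $V$. For this reason the estimate must be arranged so that the velocity always appears under a time integral and is closed by Gronwall; it is in fact cleanest to establish the contraction first in the $L^2(0,T;Q)$ norm and then upgrade to $C([0,T];Q)$ using the continuity of $\bsigma_M$ and $\bu_M$ guaranteed by Theorem \ref{teo_PVM}, after eliminating the electric potential through its elliptic estimate and the displacement through (\ref{def_uM}). Once the displayed inequality is secured, the passage to the fixed point is routine.
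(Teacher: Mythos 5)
Your proposal is correct and follows essentially the same route as the paper's proof: reduce via the Lipschitz property (\ref{hip6}) of $\Ge$ to estimates on $\|\bsigma_1(t)-\bsigma_2(t)\|_Q$ and $\|\bu_1(t)-\bu_2(t)\|_V$, run the energy argument on the difference of the two instances of (\ref{eq_vM}) tested with the velocity difference, eliminate the potential and the displacement through (\ref{eq_phiM}) and (\ref{def_uM}), apply Gronwall's lemma to arrive at $\|\Theta\bF_1(t)-\Theta\bF_2(t)\|_Q^2\le C\int_0^t\|\bF_1(s)-\bF_2(s)\|_Q^2\,ds$, and conclude by the standard iteration argument (Lemma 4.7 of \cite{SHSbook}) together with Banach's fixed point theorem. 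If anything, you treat one point more carefully than the paper: the paper's proof asserts (after ``some tedious calculations'') a pointwise-in-time bound on $\|\bv_1(t)-\bv_2(t)\|_V$, although the energy method controls the velocity difference pointwise only in $H$ and in $V$ only in the $L^2(0,T;V)$ sense, and your device of closing the contraction estimate in the $L^2$-in-time norm first and then upgrading to $C([0,T];Q)$ via the continuity of $\bsigma_M$ and $\bu_M$ addresses exactly this issue.
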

\begin{proof}{}
The continuity of $\Theta\bF$ is a straightforward consequence of
the continuity of $\bsigma_M$ and $\bu_M$ and (\ref{hip6}).
Moreover, let $\bF_1,\bF_2\in C([0,T];Q)$ and let $\bM_1,\bM_2\in
C([0,T];Q)$ be their corresponding integrals in time. For the sake
of simplicity, we use the notation $\bu_i=\bu_{M_i}$,
$\bv_i=\bv_{M_i}$, $\bsigma_i=\bsigma_{M_i}$ and
$\varphi_i=\varphi_{M_i}$ for $i=1,2$. Let $t\in [0,T]$. From
(\ref{hip6}) we find that
\begin{equation}\label{eq_des3}
\|\Theta\bF_1(t)-\Theta\bF_2(t)\|_Q\le C(\|\bsigma_1(t)-\bsigma_2(t)\|_Q+\|\bu_1(t)-\bu_2(t)\|_V).
\end{equation}
By using (\ref{def_sigmaM}) in (\ref{eq_vM}) successively for
$\bM=\bM_i$, $i=1,2$, taking  in both cases $\bw=\bv_1(t)-\bv_2(t)$
and subtracting the resulting equations, we obtain
\begin{align*}
&<\rho(\dot{\bv}_1(t)-\dot{\bv}_2(t)),\bv_1(t)-\bv_2(t)>_{V'\times V}+(\bsigma_1(t)-\bsigma_2(t),\bvar(\bv_1(t)-\bv_2(t)))_Q\\%
&\qquad
+j(\bu_1(t),\bv_1(t)-\bv_2(t))-j(\bu_2(t),\bv_1(t)-\bv_2(t))=0.
\end{align*}
Integrating in time and using (\ref{2.16bis}) and
$\bv_1(0)=\bv_2(0)$ we deduce that
\begin{align*}
&\frac{1}{2}\|\bv_1(t)-\bv_2(t)\|_V^2\\%
&\qquad\le C\int_0^t(\|\bsigma_1(s)-\bsigma_2(s)\|_Q+\|\bu_1(s)-\bu_2(s)\|_V)\|\bv_1(s)-\bv_2(s)\|_V\,ds.
\end{align*}
Also, from (\ref{def_sigmaM}) it follows that
\begin{align}
&\|\bsigma_1(t)-\bsigma_2(t)\|_Q\le C\left(\|\bv_1(t)-\bv_2(t)\|_V+\|\bu_1(t)-\bu_2(t)\|_V\right.\nonumber\\
&\left.\qquad\qquad
+\|\varphi_1(t)-\varphi_2(t)\|_W+\int_0^t\|\bF_1(s)-\bF_2(s)\|_Q\,ds\right).\label{eq_des2}
\end{align}
Combining the last two inequalities, we get
\begin{align*}
&\|\bv_1(t)-\bv_2(t)\|_V^2%
\le C\int_0^t\left(\|\bv_1(s)-\bv_2(s)\|_V+\|\bu_1(s)-\bu_2(s)\|_V\right.\\%
&\left.\qquad+\|\varphi_1(s)-\varphi_2(s)\|_W+\int_0^s\|\bF_1(r)-\bF_2(r)\|_Q\,dr\right)\|\bv_1(s)-\bv_2(s)\|_V\,ds.
\end{align*}
By using (\ref{deuav}) and (\ref{dephiau}) and after some tedious calculations, we obtain
\begin{equation*}
\|\bv_1(t)-\bv_2(t)\|_V^2%
\le C\left(\int_0^t(\|\bv_1(s)-\bv_2(s)\|_V^2\,ds%
+\int_0^t\|\bF_1(s)-\bF_2(s)\|_Q^2\,ds\right).
\end{equation*}
By using the Gronwall's Lemma, we find that
\begin{equation*}
\|\bv_1(t)-\bv_2(t)\|_V^2\le C\int_0^t\|\bF_1(s)-\bF_2(s)\|_Q^2\,ds.
\end{equation*}
This last inequality combined with (\ref{deuav}), (\ref{dephiau})
and (\ref{eq_des2}) allows us to have, from (\ref{eq_des3}), the
following estimate:
\begin{equation*}
\|\Theta\bF_1(t)-\Theta\bF_2(t)\|_Q^2\le C\int_0^t\|\bF_1(s)-\bF_2(s)\|_Q^2\,ds.
\end{equation*}
Following a standard argument (see again Lemma 4.7 in
\cite{SHSbook}), from the previous inequality and using the Banach's
fixed point theorem, we conclude that there exists a unique
$\bF^*\in C([0,T];Q)$ such that $\Theta\bF^*=\bF^*$.
\end{proof}
We can now give the proof of Theorem \ref{teo_PV}.

\begin{proof}{[of Theorem \ref{teo_PV}]}
By using Proposition \ref{teo_ptofijo_M} there exists a unique
$\bF^*\in C([0,T];Q)$ such that $\Theta\bF^*=\bF^*$.  We define
$$
\bM^*(t)=\int_0^t\bF^*(s)\,ds.
$$
We also define $\bu=\bu_{M^*}$, $\bv=\bv_{M^*}$,
$\bsigma=\bsigma_{M^*}$ and $\varphi=\varphi_{M^*}$. By taking
$\bM=\bM^*$ in (\ref{eq_vM}) we obtain (\ref{var2}), because
$$
\bM(t)=\bM^*(t)=\int_0^t\bF^*(s)\,ds=\int_0^t\Theta\bF^*(s)\,ds=\int_0^t\Ge(\bsigma(s),\bvar(\bu(s)))\,ds.
$$
Finally, from (\ref{eq_phiM}) it follows (\ref{var3}), and
 (\ref{def_sigmaM}) leads to (\ref{var1}).
\end{proof}

\section{Fully discrete approximations and an a priori error analysis}\label{section4}\setcounter{equation}{0}

In this section, we introduce a finite element algorithm for
approximating solutions to variational problem \ref{PV}. Its
discretization is done in two steps. First, we consider the finite
element spaces $V^h\subset V$, $Q^h\subset Q$ and $W^h\subset W$
given by
\begin{eqnarray}
&&\hspace*{-0.7cm}\label{defvh} V^h=\{\bv^h\in
[C(\overline{\Omega})]^d \; ; \; \bv^h_{|_{T}}\in [P_1(T)]^d, \,
T\in {\T}^h,\quad
\bv^h=\bzero\, \hbox{    on   } \,\Gamma_D\},\\[2pt]
&&\label{defqh}\hspace*{-0.7cm} Q^h=\{\btau^h\in Q \; ; \;
\btau^h_{|_{T}}\in [P_0(T)]^{d\times d}, \,   T\in {\T}^h\},\\[2pt]
&&\hspace*{-0.7cm}\label{defwh} W^h=\{\psi^h\in C(\overline{\Omega})
\; ; \; \psi^h_{|_{T}}\in P_1(T), \,   T\in {\T}^h,\quad \psi^h=0\,
\hbox{    on   } \,\Gamma_A\},
\end{eqnarray}
where $\Omega$ is assumed to be a polyhedral domain, ${\T}^h$
denotes a triangulation of $\overline{\Omega}$ compa\-ti\-ble with
the partition of the boundary $\Gamma=\partial \Omega$ into
$\Gamma_D$, $\Gamma_N$ and $\Gamma_C$ on one hand, and into
$\Gamma_A$ and $\Gamma_B$ on the other hand, and $P_q(T)$, $q=0,1$,
represents the space of polynomials of global degree less or equal
to $q$ in $T$. Here, $h>0$ denotes the spatial discretization
parameter.

Secondly, the time derivatives are discretized by using a uniform
partition of the time interval $[0,T]$, denoted by
$0=t_0<t_1<\ldots<t_N=T$, and let $k$ be the time step size,
$k=T/N$. Moreover, for a continuous function $f(t)$ we denote
$f_n=f(t_n)$ and, for the sequence $\{z_n\}_{n=0}^N$, we denote by
$\delta z_n=(z_n-z_{n-1})/k $ its corresponding divided differences.

Using a hybrid combination of the forward and backward Euler
schemes, the fully discrete approximation of Problem \ref{PV} is the
following.

\begin{problem}\label{PVhk}
Find a discrete velocity field
$\bv^{hk}=\{\bv_n^{hk}\}_{n=0}^N\subset V^h$, a discrete stress
field $\bsigma^{hk}=\{\bsigma_n^{hk}\}_{n=0}^N\subset Q^h$ and a
discrete electric potential field
$\varphi^{hk}=\{\varphi_n^{hk}\}_{n=0}^N\subset W^h$ such that
$\bv^{hk}_0=\bv_0^h$ and for $n=1,\ldots, N$ and for all $\bw^h\in
V^h$ and $\psi^h\in W^h$,
\begin{eqnarray}
&&\hspace*{-0.6cm}
\bsigma_n^{hk}=\A\bvarepsilon(\bv_n^{hk})+\B\bvarepsilon(\bu_n^{hk})+k\sum_{j=0}^{n-1}
\Ge (\bsigma_j^{hk},\bvarepsilon(\bu_j^{hk}))- \E^*{\bf
E}(\varphi_{n-1}^{hk}),\label{disvar1}\\
&&\hspace*{-0.6cm} \displaystyle \nonumber ( \rho
\delta{\bv_n^{hk}},\bw^h)_H
+\left(\A\bvarepsilon(\bv_n^{hk})+\B\bvarepsilon(\bu_n^{hk})+k\sum_{j=0}^{n-1}\Ge (\bsigma_j^{hk},\bvarepsilon(\bu_j^{hk})),\bvarepsilon(\bw^h)\right)_Q\\
&&\qquad \qquad =\langle\fb_n,\bw^h\rangle_{V'\times V}-
\left(\E^*{\bf E}(\varphi_{n-1}^{hk}),\bvarepsilon(\bw^h)\right)_Q -
j(\bu_{n-1}^{hk},\bw^h) , \label{disvar2}\\
&& \hspace*{-0.6cm} \label{disvar3} (\Beta \nabla
\varphi_n^{hk},\nabla \psi^h)_H-(\E\bvarepsilon(\bu_n^{hk}),\nabla
\psi^h)_H= (q_n,\psi^h)_W,
\end{eqnarray}
where the discrete displacement field
$\bu^{hk}=\{\bu_n^{hk}\}_{n=0}^N\subset V^h$ is given by
\begin{equation}\label{disvar4}
\bu_n^{hk}=k\sum_{j=1}^n \bv_j^{hk} + \bu_0^h,
\end{equation}
and the artificial discrete initial condition $\varphi_0^{hk}$ is
the solution to the following problem:
\begin{equation}\label{disvar5}
(\Beta \nabla \varphi_0^{hk},\nabla
\psi^h)_H-(\E\bvarepsilon(\bu_0^{h}),\nabla \psi^h)_H=
(q_0,\psi^h)_W\quad \forall \psi^h\in W^h.
\end{equation}
\end{problem}
Here, we note that the discrete initial conditions, denoted by
$\bu_0^h$ and $\bv_0^h$ are given by
\begin{equation}\label{dis_inicond}
\bu_0^h=\mathcal{P}^{h} \bu_0,\quad \bv_0^h=\mathcal{P}^{h} \bv_0,
\end{equation}
where $\mathcal{P}^{h}$ is the $[L^2(\Omega)]^d$-projection operator
over the finite element space $V^h$.

Using assumptions (\ref{hip1})--(\ref{hip8})  and the classical
Lax-Milgram lemma, it is easy to prove that Problem \ref{PVhk} has a
unique discrete solution
$(\bv^{hk},\varphi^{hk},\bsigma^{hk})\subset V^h\times W^h\times
Q^h$.

Our aim in this section is to derive some a priori error estimates
for the numerical errors $\bu_n-\bu_n^{hk}$, $\bv_n-\bv_n^{hk}$ and
$\varphi_n-\varphi_n^{hk}$. Therefore, we assume that the solution
to Problem \ref{PV} has the following regularity:
\begin{equation}\label{regu}
\begin{array}{l}
\bu\in C^1([0,T];V)\cap C^2([0,T];H),\quad \bsigma \in C([0,T];Q),\\
\varphi\in C([0,T];W).
\end{array}
\end{equation}

Thus, we have the following result.

\begin{theorem}\label{theorem_apriori}
Let assumptions (\ref{hip1})--(\ref{hip8}) and the additional
regularity (\ref{regu}) hold. If we denote by
$(\bv,\varphi,\bsigma)$ and $(\bv^{hk},\varphi^{hk},\bsigma^{hk})$
the respective solutions to problems \ref{PV} and \ref{PVhk}, then
there exists a positive constant $C>0$, independent of the
discretization parameters $h$ and $k$, such that, for all
$\bw^h=\{\bw_j^h\}_{j=0}^N\subset  V^h$ and
$\psi^h=\{\psi_j^h\}_{j=0}^N\subset W^h$,
\begin{eqnarray}
&&\nonumber\hspace*{-1cm} \displaystyle \max_{0\leq n \leq
N}\|\bv_n-\bv_n^{hk}\|_H^2+ \max_{0\leq n \leq
N}\|\bu_n-\bu_n^{hk}\|_V^2+\max_{0\leq n \leq
N}\|\varphi_n-\varphi_n^{hk}\|_W^2\\
&& \qquad
+Ck\sum_{j=0}^N\|\bv_j-\bv_j^{hk}\|_V^2+Ck\sum_{j=0}^N\|\bsigma_j-\bsigma_j^{hk}\|_Q^2\nonumber
\end{eqnarray}
\begin{eqnarray}
&& \leq Ck\sum_{j=1}^N\Big(\|\dot{\bv}_j-\delta
\bv_j\|_H^2+\|\dot{\bu}_j-\delta \bu_j\|_V^2
 +k^2+\|\bv_j-\bw^h_j\|_V^2+\|\varphi_j-\psi_j^h\|_W^2\nonumber\\
&& \nonumber \qquad +I_j^2\Big)+C\max_{0\leq n\leq
N}\|\bv_n-\bw_n^h\|_H^2+C\Big(\|\bu_0-\bu_0^h\|_V^2+\|\bv_0-\bv_0^h\|_H^2\\
&& \qquad +C\|\varphi_0-\psi_0^h\|_W^2\Big)
+\frac{C}{k}\sum_{j=1}^{N-1}\|\bv_j-\bw_j^h-(\bv_{j+1}-\bw_{j+1}^h)\|^2_H,
\label{est_final}
\end{eqnarray}
where the integration error $I_n$ is defined as
\begin{equation}\label{defIn}
I_n=\left\|\int_0^{t_n} \Ge(\bsigma(s),\bvarepsilon(\bu(s)))\,
ds-k\sum_{j=0}^{n-1}\Ge (\bsigma_j,\bvarepsilon(\bu_j))\right\|_Q.
\end{equation}
\end{theorem}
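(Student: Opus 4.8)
The plan is to exploit the conformity $V^h\subset V$, $W^h\subset W$, so that the exact solution may be inserted as a competitor in the discrete scheme, and to derive a single inequality for the velocity error from which all other errors are recovered by a chaining argument closed with a discrete Gronwall lemma. Write $\be^v_n=\bv_n-\bv_n^{hk}$, $\be^u_n=\bu_n-\bu_n^{hk}$ and $\be^\varphi_n=\varphi_n-\varphi_n^{hk}$. First I would evaluate the momentum equation (\ref{var2}) at $t=t_n$, restrict its test function to $\bw^h\in V^h$, and subtract the discrete equation (\ref{disvar2}). This yields an identity balancing the inertial discrepancy $\langle\rho\dot\bv_n-\rho\,\delta\bv_n^{hk},\bw^h\rangle_{V'\times V}$ against the viscous term $(\A\bvar(\be^v_n),\bvar(\bw^h))_Q$, the elastic term $(\B\bvar(\be^u_n),\bvar(\bw^h))_Q$, the piezoelectric term coupling to the \emph{lagged} potential $\varphi_{n-1}^{hk}$, the contact term $j(\bu_n,\bw^h)-j(\bu_{n-1}^{hk},\bw^h)$, and the viscoplastic term comparing $\int_0^{t_n}\Ge\,ds$ with its quadrature $k\sum_{j=0}^{n-1}\Ge(\bsigma_j^{hk},\bvar(\bu_j^{hk}))$.

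The central device is the choice $\bw^h=\bw_n^h-\bv_n^{hk}\in V^h$ for arbitrary $\bw_n^h\in V^h$, together with the decomposition $\bw_n^h-\bv_n^{hk}=\be^v_n-(\bv_n-\bw_n^h)$. Splitting $\dot\bv_n-\delta\bv_n^{hk}=(\dot\bv_n-\delta\bv_n)+\delta\be^v_n$ and using (for constant $\rho$) the elementary inequality $(\rho(a-b),a)_H\ge\tfrac{\rho}{2}(\|a\|_H^2-\|b\|_H^2)$, the contribution $(\rho\,\delta\be^v_n,\be^v_n)_H$ becomes a telescoping quantity which, once multiplied by $k$ and summed over $n$, produces the controlling term $\max_n\|\be^v_n\|_H^2$. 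Simultaneously, the ellipticity (\ref{hip1}) of $\A$ and Korn's inequality give $(\A\bvar(\be^v_n),\bvar(\be^v_n))_Q\ge m_\A\|\be^v_n\|_V^2$, furnishing the coercive term $Ck\sum_j\|\be^v_j\|_V^2$ on the left of (\ref{est_final}). The delicate inertial cross term $-(\rho\,\delta\be^v_n,\bv_n-\bw_n^h)_H$ is handled by Abel summation: this transfers the difference quotient onto the approximation error and yields exactly the boundary term $C\max_n\|\bv_n-\bw_n^h\|_H^2$ together with $\tfrac{C}{k}\sum_j\|(\bv_j-\bw_j^h)-(\bv_{j+1}-\bw_{j+1}^h)\|_H^2$ in (\ref{est_final}).

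The coupling terms are then estimated by Cauchy--Schwarz and Young's inequality (\ref{Young}), with quadratic velocity factors absorbed into $m_\A\|\be^v_n\|_V^2$. The elastic and contact terms are bounded through (\ref{hip2}) and the Lipschitz estimate (\ref{2.16bis}) by $\|\be^u_n\|_V$ (and $\|\be^u_{n-1}\|_V$ plus an $O(k)$ consistency error for the lagged contact argument). The electric error is treated separately: subtracting (\ref{disvar3}) from (\ref{var3}) at $t_n$ and testing with $\psi^h$, the coercivity (\ref{hip4}) of $\Beta$ and the bound (\ref{hip3}) give the elliptic estimate $\|\be^\varphi_n\|_W\le C(\|\be^u_n\|_V+\|\varphi_n-\psi_n^h\|_W)$, which controls the piezoelectric contribution, the time lag $\varphi_n-\varphi_{n-1}$ being $O(k)$ since $\varphi$ is slaved through the same elliptic problem to $\bu\in C^1$. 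The viscoplastic term splits into the pure integration error $I_n$ of (\ref{defIn}) and a Lipschitz remainder bounded via (\ref{hip6}) by $k\sum_{j}(\|\bsigma_j-\bsigma_j^{hk}\|_Q+\|\be^u_j\|_V)$. For the stress itself, subtracting (\ref{disvar1}) from (\ref{var1}) and using (\ref{hip1})--(\ref{hip3}) and (\ref{hip6}) gives $\|\bsigma_n-\bsigma_n^{hk}\|_Q\le C(\|\be^v_n\|_V+\|\be^u_n\|_V+\|\be^\varphi_{n-1}\|_W+I_n+k\sum_{j<n}\|\bsigma_j-\bsigma_j^{hk}\|_Q)$, whence a discrete Gronwall argument controls the stress error and the left-hand term $Ck\sum_j\|\bsigma_j-\bsigma_j^{hk}\|_Q^2$. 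Finally, (\ref{disvar4}) ties displacement to velocity, $\|\be^u_n\|_V\le C(\|\bu_0-\bu_0^h\|_V+k\sum_{j\le n}\|\dot\bu_j-\delta\bu_j\|_V+k\sum_{j\le n}\|\be^v_j\|_V)$, the quadrature discrepancy $\|\dot\bu_j-\delta\bu_j\|_V$ entering the right-hand side of (\ref{est_final}).

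Collecting the summed inequality, every term is either of the asserted data or consistency type (the $k^2$ term coming from the squared $O(k)$ lag and quadrature errors) or a time sum $k\sum_{j\le n}$ of $\|\be^v_j\|_V^2$, $\|\be^u_j\|_V^2$ and $\|\be^\varphi_j\|_W^2$; a discrete Gronwall inequality absorbs the latter into the left-hand maxima and produces (\ref{est_final}). I expect the inertial cross term to be the main obstacle: since $\be^v_n\notin V^h$ one cannot test directly with the error, and it is precisely the summation by parts on $(\rho\,\delta\be^v_n,\bv_n-\bw_n^h)_H$ — with careful bookkeeping of the endpoint contributions at $n=1$ and $n=N$ — that generates the last two terms of (\ref{est_final}) while keeping the telescoping structure needed for the $\max_n$ bounds. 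A secondary difficulty is arranging the four coupled errors (velocity, displacement, stress, potential) so that the chain of estimates is non-circular before Gronwall's lemma is applied.
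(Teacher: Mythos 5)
Your proposal follows essentially the same route as the paper's proof: the same three error equations (a recursive estimate for the stress from (\ref{var1})--(\ref{disvar1}), an elliptic estimate $\|\varphi_n-\varphi_n^{hk}\|_W\le C(\|\bu_n-\bu_n^{hk}\|_V+\|\varphi_n-\psi_n^h\|_W)$ from (\ref{var3})--(\ref{disvar3}), and the momentum error equation tested with $\bw^h-\bv_n^{hk}$), the same telescoping inequalities for the inertial and elastic terms, the same summation by parts on $(\rho\,\delta\bv_j-\rho\,\delta\bv_j^{hk},\bv_j-\bw_j^h)_H$ producing the $\max_n\|\bv_n-\bw_n^h\|_H^2$ and $\frac{C}{k}\sum_j\|\cdot\|_H^2$ terms, and a closing discrete Gronwall argument. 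The only detail you gloss over is the separate treatment of the artificial initial condition $\varphi_0^{hk}$ defined by (\ref{disvar5}), which the paper bounds by $C(\|\bu_0-\bu_0^h\|_V^2+\|\varphi_0-\psi_0^h\|_W^2)$ using the same elliptic argument, but this is a minor omission rather than a gap.
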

\begin{proof}

First, we obtain some estimates on the stress field. Subtracting
equations (\ref{var1}), at time $t=t_n$, and (\ref{disvar1}), taking
into account assumptions (\ref{hip1})--(\ref{hip8}) we easily find
that
\begin{equation}\label{est_stress}
\begin{array}{l}
\hspace*{-0cm}\displaystyle \|\bsigma_n-\bsigma_n^{hk}\|_Q\leq
C\Big(\|\bv_n-\bv_n^{hk}\|_V+ \|\bu_n-\bu_n^{hk}\|_V+
I_n+k\\
\qquad\qquad\displaystyle
+k\sum_{j=0}^{n-1}\Big[\|\bu_{j}-\bu_{j}^{hk}\|_V+\|\bsigma_j-\bsigma_j^{hk}\|_Q+\|\varphi_j-\varphi_{j}^{hk}\|_W\Big]\Big),
\end{array}
\end{equation}
where the integration error $I_n$ is defined in (\ref{defIn}).

Secondly, we obtain the estimates on the electric potential field.
We subtract variational equation (\ref{var3}), at time $t=t_n$ and
for $\psi=\psi^h\in W^h$, and discrete variational equation
(\ref{disvar3}) to get, for all $\psi^h\in W^h$,
$$(\Beta \nabla
(\varphi_n-\varphi_n^{hk}),\nabla
\psi^h)_H-(\E\bvarepsilon(\bu_n-\bu_n^{hk}),\nabla \psi^h)_H=0\quad
\forall \psi^h\in W^h.$$ Therefore, it follows that, for all $
\psi^h\in W^h$,
$$\begin{array}{l}
(\Beta \nabla (\varphi_n-\varphi_n^{hk}),\nabla
(\varphi_n-\varphi_n^{hk}))_H-(\E\bvarepsilon(\bu_n-\bu_n^{hk}),\nabla
(\varphi_n-\varphi_n^{hk}))_H\\
\qquad = (\Beta \nabla (\varphi_n-\varphi_n^{hk}),\nabla
(\varphi_n-\psi^h))_H-(\E\bvarepsilon(\bu_n-\bu_n^{hk}),\nabla
(\varphi_n-\psi^h))_H.
\end{array}$$

Using again assumptions (\ref{hip1})--(\ref{hip8}) and several times
Young's inequality (\ref{Young}), we find that
\begin{equation}\label{est_elec_pot}
\|\varphi_n-\varphi_n^{hk}\|_W^2\leq
C\Big(\|\bu_n-\bu_n^{hk}\|_V^2+\|\varphi_n-\psi^h\|_W^2\Big) \quad
\forall \psi^h\in W^h.
\end{equation}

Finally, we obtain the estimates on the velocity and displacement
fields. To do that, we subtract variational equation (\ref{var2}),
at time $t=t_n$ and for $\bw=\bw^h\in V^h$, and discrete variational
equation (\ref{disvar2}) to get, for all $\bw^h\in V^h$,
$$\begin{array}{l}
\hspace*{-0.5cm}\displaystyle ( \rho
(\dot{\bv}_n-\delta{\bv_n^{hk}}),\bw^h)_H
+\left(\A\bvarepsilon(\bv_n-\bv_n^{hk})+\B\bvarepsilon(\bu_n-\bu_n^{hk}),\bvarepsilon(\bw^h)\right)_Q\\
\quad \displaystyle +\left(\int_0^{t_n} \Ge
(\bsigma(s),\bvarepsilon(\bu(s)))\, ds- k\sum_{j=1}^{n-1}\Ge
(\bsigma_j,\bvarepsilon(\bu_j)),\bvarepsilon(\bw^h)\right)_Q\\
\quad \displaystyle +\left( k\sum_{j=0}^{n-1}[\Ge (\bsigma_j,\bvarepsilon(\bu_j))-\Ge (\bsigma_j^{hk},\bvarepsilon(\bu_j^{hk}))],\bvarepsilon(\bw^h)\right)_Q\\
\quad - \left(\E^*{\bf E}(\varphi_{n})-\E^*{\bf
E}(\varphi_{n-1}^{hk}),\bvarepsilon(\bw^h)\right)_Q +j(\bu_n,\bw^h)-
j(\bu_{n-1}^{hk},\bw^h)=0.
\end{array}$$
Therefore, we find that, for all  $\bw^h\in V^h$,
$$\begin{array}{l}
\displaystyle ( \rho
(\dot{\bv}_n-\delta{\bv_n^{hk}}),\bv_n-\bv_n^{hk})_H
+\left(\A\bvarepsilon(\bv_n-\bv_n^{hk})+\B\bvarepsilon(\bu_n-\bu_n^{hk}),\bvarepsilon(\bv_n-\bv_n^{hk})\right)_Q\\
\qquad \displaystyle +\left(\int_0^{t_n} \Ge
(\bsigma(s),\bvarepsilon(\bu(s)))\, ds- k\sum_{j=0}^{n-1}\Ge
(\bsigma_j,\bvarepsilon(\bu_j)),\bvarepsilon(\bv_n-\bv_n^{hk})\right)_Q\\
\qquad \displaystyle +\left( k\sum_{j=0}^{n-1}[\Ge (\bsigma_j,\bvarepsilon(\bu_j))-\Ge (\bsigma_j^{hk},\bvarepsilon(\bu_j^{hk}))],\bvarepsilon(\bv_n-\bv_n^{hk})\right)_Q\\
\qquad - \left(\E^*{\bf E}(\varphi_{n})-\E^*{\bf
E}(\varphi_{n-1}^{hk}),\bvarepsilon(\bv_n-\bv_n^{hk})\right)_Q\\
\qquad  +j(\bu_n,\bv_n-\bv_n^{hk})-
j(\bu_{n-1}^{hk},\bv_n-\bv_n^{hk})\\
\displaystyle\quad = ( \rho
(\dot{\bv}_n-\delta{\bv_n^{hk}}),\bv_n-\bw^{h})_H
+\left(\A\bvarepsilon(\bv_n-\bv_n^{hk})+\B\bvarepsilon(\bu_n-\bu_n^{hk}),\bvarepsilon(\bv_n-\bw^{h})\right)_Q\\
\qquad \displaystyle +\left(\int_0^{t_n} \Ge
(\bsigma(s),\bvarepsilon(\bu(s)))\, ds- k\sum_{j=0}^{n-1}\Ge
(\bsigma_j,\bvarepsilon(\bu_j)),\bvarepsilon(\bv_n-\bw^h)\right)_Q\\
\qquad \displaystyle +\left( k\sum_{j=0}^{n-1}[\Ge (\bsigma_j,\bvarepsilon(\bu_j))-\Ge (\bsigma_j^{hk},\bvarepsilon(\bu_j^{hk}))],\bvarepsilon(\bv_n-\bw^{h})\right)_Q\\
\qquad - \left(\E^*{\bf E}(\varphi_{n})-\E^*{\bf
E}(\varphi_{n-1}^{hk}),\bvarepsilon(\bv_n-\bw^{h})\right)_Q\\
\qquad  +j(\bu_n,\bv_n-\bw^{h})- j(\bu_{n-1}^{hk},\bv_n-\bw^{h}).
\end{array}$$

Keeping in mind assumptions (\ref{hip1}) and (\ref{hip2}) it follows
that
$$\begin{array}{l}
\displaystyle\left(\A\bvarepsilon(\bv_n-\bv_n^{hk}),\bvarepsilon(\bv_n-\bv_n^{hk})\right)_Q\geq
C\|\bv_n-\bv_n^{hk}\|_V^2,\\
\displaystyle\left(\B\bvarepsilon(\bu_n-\bu_n^{hk}),\bvarepsilon(\bv_n-\bv_n^{hk})\right)_Q\geq
\displaystyle\left(\B\bvarepsilon(\bu_n-\bu_n^{hk}),\bvarepsilon(\dot{\bu}_n-\delta\bu_n)\right)_Q\\
\displaystyle\qquad +
\frac{C}{2k}\left\{\|\bu_n-\bu_n^{hk}\|_V^2-\|\bu_{n-1}-\bu_{n-1}^{hk}\|_V^2\right\},
\end{array}$$
where $\delta \bu_n=(\bu_n-\bu_{n-1})/k$ and we used
(\ref{disvar4}). Moreover, since
$$\begin{array}{l}
\displaystyle\hspace*{-1cm}(
\rho(\dot{\bv}_n-\delta{\bv_n^{hk}}),\bv_n-\bv_n^{hk})_H\geq (
\rho(\dot{\bv}_n-\delta{\bv_n}),\bv_n-\bv_n^{hk})_H\\
\displaystyle\qquad +
\frac{C}{2k}\left\{\|\bv_n-\bv_n^{hk}\|_H^2-\|\bv_{n-1}-\bv_{n-1}^{hk}\|_H^2\right\},
\end{array}$$
where $\delta \bv_n=(\bv_n-\bv_{n-1})/k$, using again several times
Young's inequality (\ref{Young}) and assumptions
(\ref{hip1})--(\ref{hip8}), we have
\begin{eqnarray*}
&&\nonumber\hspace*{-1cm} \displaystyle  \frac{1}{2k}\left[
\|\bv_n-\bv_n^{hk}\|_H^2-\|\bv_{n-1}-\bv_{n-1}^{hk}\|_H^2\right] +
\frac{1}{2k}\left[
\|\bu_n-\bu_n^{hk}\|_V^2-\|\bu_{n-1}-\bu_{n-1}^{hk}\|_V^2\right]\\
&&\nonumber \qquad  +C\|\bv_n-\bv_n^{hk}\|_V^2\\
&&\nonumber   \leq C\Big(\|\dot{\bv}_n-\delta
\bv_n\|_H^2+\|\dot{\bu}_n-\delta \bu_n\|_V^2
 +k^2+\|\bv_{n}-\bv_{n}^{hk}\|_H^2+\|\bv_n-\bw^h\|_V^2\\
&&\nonumber  \qquad
 +\|\varphi_{n-1}-\varphi_{n-1}^{hk}\|_W^2+I_n^2+\|\bu_n-\bu_n^{hk}\|_V^2+\|\bu_{n-1}-\bu_{n-1}^{hk}\|_V^2\\
&& \qquad  +\rho(\delta \bv_n-\delta
 \bv_n^{hk},\bv_n-\bw^h)_H\Big) \quad \forall \bw^h\in
 V^h.
\end{eqnarray*}
Therefore, by induction we find that, for all
$\bw^h=\{\bw_j^h\}_{j=0}^n\subset
 V^h$,
\begin{eqnarray}
&&\nonumber\hspace*{-0.7cm} \displaystyle \|\bv_n-\bv_n^{hk}\|_H^2+
\|\bu_n-\bu_n^{hk}\|_V^2+Ck\sum_{j=1}^n\|\bv_j-\bv_j^{hk}\|_V^2\\
&&\nonumber \hspace*{-0.2cm} \leq
Ck\sum_{j=1}^n\Big(\|\dot{\bv}_j-\delta
\bv_j\|_H^2+\|\dot{\bu}_j-\delta \bu_j\|_V^2
 +k^2+\|\bv_{j}-\bv_{j}^{hk}\|_H^2\\
&&\nonumber  \quad +\|\bv_j-\bw^h_j\|_V^2
 +\|\varphi_{j-1}-\varphi_{j-1}^{hk}\|_W^2+I_j^2+\|\bu_j-\bu_j^{hk}\|_V^2\\
&& \quad  +\rho(\delta \bv_j-\delta
 \bv_j^{hk},\bv_j-\bw_j^h)_H\Big)+C\left( \|\bv_0-\bv_0^h\|_H^2+\|\bu_0-\bu_0^h\|_V^2\right)
 .\label{est_vel}
\end{eqnarray}

Now, combining (\ref{est_stress}), (\ref{est_elec_pot}) and
(\ref{est_vel}) it follows that, for all
$\bw^h=\{\bw_j^h\}_{j=0}^n\subset  V^h$ and
$\psi^h=\{\psi_j^h\}_{j=0}^n\subset W^h$,
\begin{eqnarray*}
&&\nonumber\hspace*{-0.7cm} \displaystyle \|\bv_n-\bv_n^{hk}\|_H^2+
\|\bu_n-\bu_n^{hk}\|_V^2+\|\varphi_n-\varphi_n^{hk}\|_W^2\\
&& \qquad +Ck\sum_{j=1}^n\|\bv_j-\bv_j^{hk}\|_V^2+Ck\sum_{j=1}^n\|\bsigma_j-\bsigma_j^{hk}\|_Q^2\\
&& \leq Ck\sum_{j=1}^n\Big(\|\dot{\bv}_j-\delta
\bv_j\|_H^2+\|\dot{\bu}_j-\delta \bu_j\|_V^2
 +k^2+\|\bv_{j}-\bv_{j}^{hk}\|_H^2\\
&&\nonumber  \qquad +\|\bv_j-\bw^h_j\|_V^2
 +\|\varphi_{j-1}-\varphi_{j-1}^{hk}\|_W^2+I_j^2+\|\bu_j-\bu_j^{hk}\|_V^2+\|\varphi_j-\psi_j^h\|_W^2\\
&& \qquad  +\rho(\delta \bv_j-\delta
 \bv_j^{hk},\bv_j-\bw_j^h)_H+k\sum_{l=1}^j\|\bsigma_l-\bsigma_l^{hk}\|_Q^2\Big)\nonumber\\
 && \qquad +C\left( \|\bv_0-\bv_0^h\|_H^2+\|\bu_0-\bu_0^h\|_V^2\right).
\end{eqnarray*}

Now, taking into account that
$$\begin{array}{l}
\displaystyle k\sum_{j=1}^n \rho(\delta \bv_j-\delta
 \bv_j^{hk},\bv_j-\bw^h_j)_H=\sum_{j=1}^n
\rho(\bv_{j}-\bv_{j}^{hk}-(\bv_{j-1}-\bv_{j-1}^{hk}),\bv_j-\bw_j^h)_H\\
 \displaystyle
\qquad
=\rho(\bv_n-\bv_n^{hk},\bv_n-\bw_n^h)_H+\rho(\bv_0^h-\bv_0,\bv_1-\bw_1^h)_H\\
\displaystyle \qquad\qquad + \sum_{j=1}^{n-1}
\rho(\bv_j-\bv_j^{hk},\bv_j-\bw_j^h-(\bv_{j+1}-\bw_{j+1}^h))_H,
\end{array}$$
using once again Young's inequality (\ref{Young}) we have, for all
$\bw^h=\{\bw_j^h\}_{j=0}^n\subset V^h$ and
$\psi^h=\{\psi_j^h\}_{j=0}^n\subset W^h$,
\begin{eqnarray*}
&&\nonumber\hspace*{-1cm} \displaystyle \|\bv_n-\bv_n^{hk}\|_H^2+
\|\bu_n-\bu_n^{hk}\|_V^2+\|\varphi_n-\varphi_n^{hk}\|_W^2\\
&& \qquad +Ck\sum_{j=1}^n\|\bv_j-\bv_j^{hk}\|_V^2+Ck\sum_{j=1}^n\|\bsigma_j-\bsigma_j^{hk}\|_Q^2\\
&& \leq Ck\sum_{j=1}^n\Big(\|\dot{\bv}_j-\delta
\bv_j\|_H^2+\|\dot{\bu}_j-\delta \bu_j\|_V^2
 +k^2+\|\bv_{j}-\bv_{j}^{hk}\|_H^2+I_j^2\\
&&\nonumber  \qquad +\|\bv_j-\bw^h_j\|_V^2
 +\|\varphi_{j-1}-\varphi_{j-1}^{hk}\|_W^2+\|\bu_j-\bu_j^{hk}\|_V^2+\|\varphi_j-\psi_j^h\|_W^2\\
&& \qquad +k\sum_{l=1}^j\|\bsigma_l-\bsigma_l^{hk}\|_Q^2\Big)
 +\frac{C}{k}\sum_{j=1}^{n-1}\|\bv_j-\bw_j^h-(\bv_{j+1}-\bw_{j+1}^h)\|^2_H\\
&& \qquad
+C\left(\|\bv_0-\bv_0^h\|_H^2+\|\bu_0-\bu_0^h\|_V^2+\|\bv_1-\bw_1^h\|_H^2+\|\bv_n-\bw^h_n\|_H^2\right)
.
\end{eqnarray*}

From the regularity (\ref{regu}) we conclude that $\varphi(0)$ is
the solution to the fo\-llo\-wing problem:
$$(\Beta \nabla \varphi(0),\nabla
\psi_0)_H-(\E\bvarepsilon(\bu(0)),\nabla \psi_0)_H=
(q(0),\psi_0)_W\quad \forall \psi_0\in W,$$ and so, proceeding as in
the proof of estimates (\ref{est_elec_pot}), we easily find that
\begin{equation*}
\|\varphi_0-\varphi_0^{hk}\|_W^2\leq
C\Big(\|\bu_0-\bu_0^{h}\|_W^2+\|\varphi_0-\psi_0^h\|_W^2\Big) \quad
\forall \psi_0^h\in W^h.
\end{equation*}

Finally, using a discrete version of Gronwall's inequality (see, for
instance, \cite{CFHS05}) we derive the a priori error estimates
(\ref{est_final}).
\end{proof}

We note that from estimates (\ref{est_final}) we can derive the
convergence order under suitable additional regularity conditions.
For instance, if we assume that the continuous solution has the
additional regularity:
 \begin{equation}\label{adi_regu}\begin{array}{l}
 \bu\in H^2(0,T;V)\cap H^3(0,T;H)\cap
 C^1([0,T];[H^2(\Omega)]^d),\\
 \varphi\in C([0,T];H^2(\Omega)),\end{array}
 \end{equation}
then we have the following result.
\begin{corollary}
Let the assumptions of Theorem \ref{theorem_apriori} still hold.
Under the additional regularity conditions (\ref{adi_regu}), it
follows the linear convergence of the solution obtained by Problem
\ref{PVhk}; that is, there exists a positive constant $C$,
independent of the discretization parameters $h$ and $k$, such that
\begin{eqnarray*}
&&\nonumber\hspace*{-1cm} \displaystyle \max_{0\leq n \leq
N}\|\bv_n-\bv_n^{hk}\|_H+ \max_{0\leq n \leq
N}\|\bu_n-\bu_n^{hk}\|_V+\max_{0\leq n \leq
N}\|\varphi_n-\varphi_n^{hk}\|_W\leq C(h+k).
\end{eqnarray*}
\end{corollary}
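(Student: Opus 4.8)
The plan is to deduce the corollary directly from the a priori bound (\ref{est_final}), which already incorporates the discrete Gronwall argument, by a judicious choice of the free discrete functions $\bw_j^h$ and $\psi_j^h$ and then estimating each term on its right-hand side separately. Concretely, I would take $\bw_j^h=\Pi^h\bv_j$ and $\psi_j^h=\Pi^h\varphi_j$, where $\Pi^h$ denotes the standard finite element interpolation operator onto $V^h$ and $W^h$ respectively, while keeping $\bu_0^h,\bv_0^h$ as the projections prescribed in (\ref{dis_inicond}). Once every term on the right of (\ref{est_final}) is shown to be $O(h^2)+O(k^2)$, taking square roots and using $\sqrt{a+b}\le\sqrt a+\sqrt b$ yields the stated $C(h+k)$ estimate.

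For the spatial errors I would invoke the additional regularity (\ref{adi_regu}). Since $\bu\in C^1([0,T];[H^2(\Omega)]^d)$ gives $\bv=\dot\bu\in C([0,T];[H^2(\Omega)]^d)$, and $\varphi\in C([0,T];H^2(\Omega))$, the classical interpolation estimates provide, uniformly in $j$, the bounds $\|\bv_j-\bw_j^h\|_V\le Ch$, $\|\bv_j-\bw_j^h\|_H\le Ch^2$ and $\|\varphi_j-\psi_j^h\|_W\le Ch$, so that the contributions $Ck\sum_j(\|\bv_j-\bw_j^h\|_V^2+\|\varphi_j-\psi_j^h\|_W^2)$ and $C\max_n\|\bv_n-\bw_n^h\|_H^2$ are $O(h^2)$. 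The initial-data terms $\|\bu_0-\bu_0^h\|_V$, $\|\bv_0-\bv_0^h\|_H$ and $\|\varphi_0-\psi_0^h\|_W$ are controlled by the $L^2$-projection error estimates, all of the right order in $h$.

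For the temporal errors I would use $\bu\in H^3(0,T;H)\cap H^2(0,T;V)$. The backward-difference consistency errors obey the standard Taylor-type bounds $k\sum_j\|\dot\bv_j-\delta\bv_j\|_H^2\le Ck^2\|\dddot\bu\|^2_{L^2(0,T;H)}$ and $k\sum_j\|\dot\bu_j-\delta\bu_j\|_V^2\le Ck^2\|\ddot\bu\|^2_{L^2(0,T;V)}$, both $O(k^2)$, together with the obvious $k\sum_j k^2=O(k^2)$. The integration error $I_n$ in (\ref{defIn}) is the left-rectangle quadrature error for the map $s\mapsto\Ge(\bsigma(s),\bvar(\bu(s)))$; differentiating (\ref{var1}) and combining (\ref{adi_regu}) with the Lipschitz property (\ref{hip6}) shows this map lies in $H^1(0,T;Q)$, whence the usual quadrature estimate gives $k\sum_n I_n^2\le Ck^2$.

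The delicate term is the last one, $\frac{C}{k}\sum_{j=1}^{N-1}\|\bv_j-\bw_j^h-(\bv_{j+1}-\bw_{j+1}^h)\|_H^2$, and the explicit $1/k$ factor is the main obstacle. Setting $\be_j=(I-\Pi^h)\bv_j$ and using the linearity of $\Pi^h$ together with $\bv_{j+1}-\bv_j=\int_{t_j}^{t_{j+1}}\dot\bv(s)\,ds$, I would move the interpolation operator inside the integral to get
$$\|\be_j-\be_{j+1}\|_H=\Big\|\int_{t_j}^{t_{j+1}}(I-\Pi^h)\dot\bv(s)\,ds\Big\|_H\le Ch\int_{t_j}^{t_{j+1}}\|\dot\bv(s)\|_V\,ds,$$
where only first-order interpolation is available because $\dot\bv=\ddot\bu$ is spatially only in $V$. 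A Cauchy--Schwarz step in time then yields $\|\be_j-\be_{j+1}\|_H^2\le Ch^2 k\int_{t_j}^{t_{j+1}}\|\dot\bv(s)\|_V^2\,ds$, so the $1/k$ factor is absorbed upon summation and $\frac{C}{k}\sum_j\|\be_j-\be_{j+1}\|_H^2\le Ch^2\|\ddot\bu\|^2_{L^2(0,T;V)}=O(h^2)$; this is precisely where the hypothesis $\bu\in H^2(0,T;V)$ is crucial. Gathering all the $O(h^2)+O(k^2)$ contributions and taking square roots completes the argument.
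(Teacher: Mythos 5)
Your overall strategy is exactly the paper's: plug good discrete approximants into the right-hand side of (\ref{est_final}), use standard finite element and Taylor/quadrature estimates under (\ref{adi_regu}), and handle the term $\frac{C}{k}\sum_{j}\|\bv_j-\bw_j^h-(\bv_{j+1}-\bw_{j+1}^h)\|_H^2$ by writing $\bv_{j+1}-\bv_j$ as a time integral of $\dot\bv=\ddot\bu$, commuting the discrete operator inside, and using Cauchy--Schwarz so that $\bu\in H^2(0,T;V)$ absorbs the $1/k$. The paper compresses all of this into citations (Ciarlet for the spatial estimates, and \cite{BFH05,CFKSV} for precisely the bound $\frac{1}{k}\sum_{j}\|\bv_j-\mathcal{P}^h\bv_j-(\bv_{j+1}-\mathcal{P}^h\bv_{j+1})\|_H^2\le Ch^2\|\bu\|^2_{H^2(0,T;V)}$), so your write-up is a legitimate unpacking of the same argument.

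There is, however, one genuine flaw: in the delicate term you take $\bw_j^h=\Pi^h\bv_j$ with $\Pi^h$ the \emph{Lagrange} interpolation operator, and then apply $(I-\Pi^h)$ to $\dot\bv(s)$ under the integral. But $\dot\bv(s)=\ddot\bu(s)$ lies only in $V\subset[H^1(\Omega)]^d$ for a.e.\ $s$, and in dimension $d\ge 2$ nodal interpolation is not even defined on $H^1$ functions (point values do not exist), let alone satisfying $\|(I-\Pi^h)w\|_H\le Ch\|w\|_V$ for all $w\in V$; that estimate is not the standard Lagrange bound, which requires $H^2$ regularity. This is exactly why the paper works with the $L^2$-projection $\mathcal{P}^h$ (already introduced in (\ref{dis_inicond})): $\mathcal{P}^h$ is defined and bounded on all of $H$, commutes with the Bochner integral in time, satisfies $\|(I-\mathcal{P}^h)w\|_H\le Ch\|w\|_V$ for $w\in V$ (by comparison with a quasi-interpolant), and on $H^2$ functions delivers the first-order $V$-error and second-order $H$-error you need in the remaining terms (the $V$-estimate uses $H^1$-stability of $\mathcal{P}^h$, valid on the quasi-uniform meshes considered). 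Note also that (\ref{est_final}) involves a \emph{single} sequence $\{\bw_j^h\}$, so you cannot use $\Pi^h$ for the "nice" terms and a different operator for the $1/k$ term; replacing $\Pi^h$ by $\mathcal{P}^h$ throughout (keeping $\psi_j^h$ as any interpolant of $\varphi_j$, which is harmless) repairs your proof and makes it coincide with the paper's.
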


Notice that this linear convergence is based on some well-known
results concerning the appro\-xi\-ma\-tion by the finite element
method (see, for instance, \cite{Ciarlet}), the discretization of
the time derivatives and the following result (see
\cite{BFH05,CFKSV} for details),
$$
\displaystyle
\frac{1}{k}\sum_{j=1}^{N-1}\|\bv_j-\mathcal{P}^h\bv_j-(\bv_{j+1}-\mathcal{P}^h\bv_{j+1})\|^2_H
\leq Ch^2\|\bu\|_{H^2(0,T;V)}^2.$$ Moreover, from the approximation
properties of operator $\mathcal{P}^h$, taking into account
regularities (\ref{adi_regu}) we can easily find that
$$\|\bu_0-\mathcal{P}^h\bu_0\|_V^2+\|\bv_0-\mathcal{P}^h\bv_0\|_H^2 \leq
Ch^2.$$

\section{Numerical results} \label{section5}

In order to verify the behaviour of the numerical method analyzed in
the previous section, some numerical experiments have been performed
in two-dimensional problems.
In all the examples presented, the elastic tensor 
was chosen as the $2D$ plane-stress elasticity tensor,
\begin{equation}
(\B \btau)_{\alpha \beta}=\frac{E r}{1-r^2}
(\tau_{11}+\tau_{22})\delta_{\alpha\beta}+\frac{E}{1+r}\tau_{\alpha\beta}
\quad \forall \btau \in \mathbb{S}^2, \label{2dps}
\end{equation}
where $\alpha,\beta=1,2$, $E$ and $r$ are the Young's modulus and
the Poisson's ratio, respectively, and $\delta_{\alpha\beta}$
denotes the Kronecker symbol. The viscous tensor $\A$ has a similar
form but multiplied by a damping coefficient $10^{-2},$ i.e.
$\mathcal{A}=10^{-2}\,\mathcal{B}$.

The viscoplastic function is a version of the Maxwell function given
by
\begin{equation}
\Ge(\bsigma, \bvarepsilon(\bu))= -\frac{1}{100}\Phi(\bsigma),
\end{equation}
being $\Phi$ 
a truncation operator defined as
\begin{equation*}
\forall \btau =(\tau_{\alpha\beta})_{\alpha,\beta=1}^2, \quad
(\Phi(\btau))_{\alpha\beta}= \left\{\begin{array}{l}
L \quad \hbox{if} \quad \tau_{\alpha\beta}>L,\\
\tau_{\alpha\beta} \quad \hbox{if} \quad \tau_{\alpha\beta}\in [-L,L],\\
-L \quad \hbox{if} \quad \tau_{\alpha\beta}<-L,
\end{array}\right.
\end{equation*}
where value $L=1000$ was taken.

Moreover, as piezoelectric and permittivity tensors,  the following
matricial forms were considered:
\begin{equation}
e_{ijk}\equiv e_{pq}= \left(\begin{array}{lcr}
0 & 0 & e_{13} \\
e_{21} & e_{22} & 0
\end{array}\right), \qquad \qquad \beta_{ij}=\left(\begin{array}{lr}
\beta_{11} & 0  \\
0 & \beta_{22}
\end{array}\right),
\end{equation}
where we have used the notations $e_{ijk}$ and $e_{pq}$ in such a
way that $p=i$ and $q=1$ if $(i,j)=(1,1)$, $q=2$ if $(i,j)=(2,2)$
and $q= 3$ if $(i,j)=(1,2)$ or $(i,j)=(2,1)$.

Finally, in all the examples the normal compliance function is
defined as $$p(r)=c_p\,\max\{0,r\},$$ where $c_p>0$ is a
deformability coefficient.

\subsection{Numerical scheme}

As a first step, the artificial discrete initial condition for the
electric potential field is obtained by solving equation
(\ref{disvar5}). This leads to a linear symmetric system solved by
using classical Cholesky's method.

Secondly, being the solution $\bu_{n-1}^{hk}, \bv_{n-1}^{hk}$ and $\varphi_{n-1}^{hk}$ known at time $t_{n-1},$
the velocity field is obtained by solving the discrete equation
$$\begin{array}{l}
\displaystyle (\rho\bv_n^{hk},
\bw^h)_H+k\left(\A\bvarepsilon(\bv_n^{hk})+k\B\bvarepsilon(\bv_n^{hk}),
\bvarepsilon(\bw^h)  \right)_Q
=(\rho\bv_{n-1}^{hk}, \bw^h)_H+ k\left< \fb_n,\bw^h\right>_{V'\times V} \\
\displaystyle -kj(\bu_{n-1}^{hk},\bw^h)
+k\left(\E^*\bE(\varphi_{n-1}^{hk}),\bvarepsilon(\bw^h)\right)_Q -
\left(k\B\bvarepsilon(\bu_{n-1}^{hk}),\bvarepsilon(\bw^h) \right)_Q \\
\displaystyle -k^2\left(\sum_{j=1}^{n-1} \Ge
\left(\bsigma_j^{hk},\bvarepsilon(\bu_j^{hk})\right),\bvarepsilon(\bw^h)\right)_Q,
\end{array}$$
where the decomposition
\begin{equation}\label{actdesp}
\bu_n^{hk}=\bu_{n-1}^{hk}+k\bv_n^{hk},
\end{equation}
has been used. Later, displacement field $\bu_{n}^{hk}$ is updated
through expression (\ref{actdesp}), and the electric potential field
is obtained solving equation (\ref{disvar3}). We note that both
numerical problems lead to linear symmetric systems and therefore,
 Cholesky's method is applied again for their solution.

The numerical scheme was implemented on a Intel Core $i5-3337U$ @
$1.80$GHz using FreeFEM$++$  (see \cite{FreeFEM} for details) and a
typical run ($100$ step times and $10000$ nodes) took about $3$ min
of CPU time.

\subsection{A first example: numerical convergence}

As a first example, a sequence of numerical solutions, based on
uniform partitions of both the time interval $[0,1]$ and the domain
$\Omega=[0,1] \times [0,1],$ have been performed in order to check
the behaviour of the numerical scheme.

The physical setting of the example is depicted in Figure
\ref{f-test1} (left-hand side). The body is in initial contact with
a deformable foundation on its lower part while $\Gamma_D=\Gamma_A$
is the right boundary $ \{1\}\times [0,1]$ (and so both the
displacement and electric potential fields vanish there). A surface
force acts on the upper surface $[0,1]\times \{1\}$ and no electric
charges are applied nor in the body or the surface.

\begin{figure}[h]
\begin{center}
\includegraphics[scale=0.58]{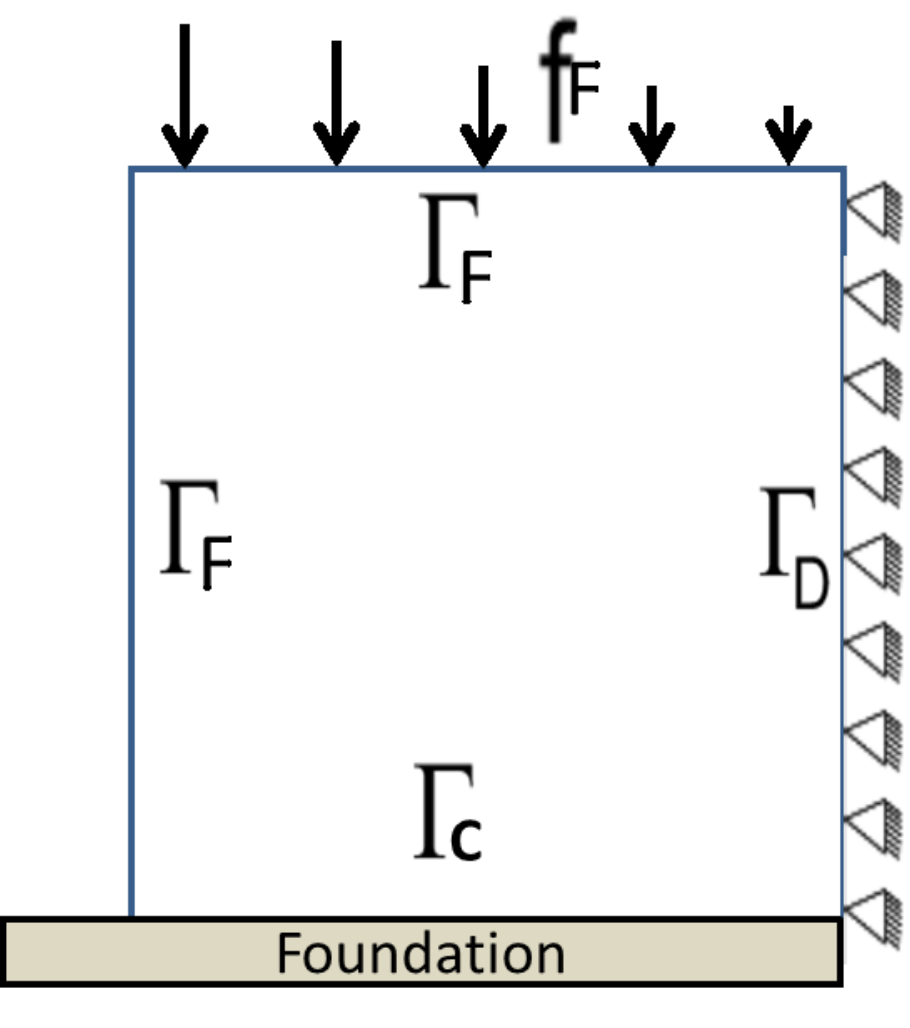}
\includegraphics[scale=0.3]{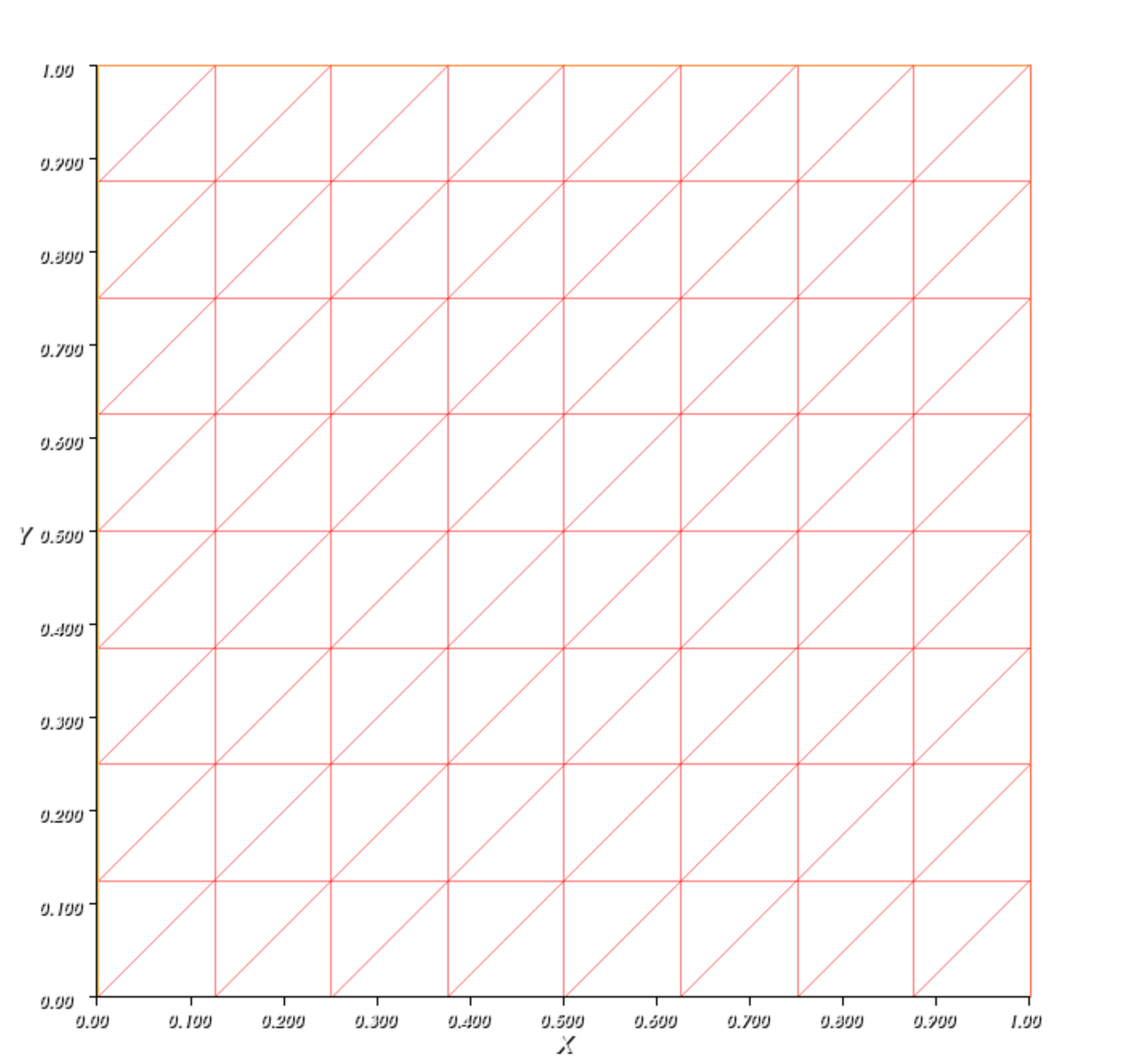}
\caption{Example 1: Physical setting and mesh example for $N_{el}$=8.}
\label{f-test1}
\end{center}
\end{figure}

\begin{table}
\begin{center}
\begin{tabular}{c c c c c }
\hline
& Piezoelectric $(C/m^2) $ & & Permittivity $(C^2/(Nm^2)$ & \\
\hline
$e_{21}$ & $e_{22}$  & $e_{13}$ & $\beta_{11}$ & $\beta_{22}$   \\
\hline
-5.4 & 15.8 & 12.3  & 916 & 830  \\
\hline
\end{tabular}
\caption{Material constants.} \label{piezct}
\end{center}
\end{table}

The numerical solution corresponding to $N_{el}=512$ subdivisions on
each outer side of the square (see the right-hand side of Fig.
\ref{f-test1} for the case $N_{el}=8$), and $k=0.00078612$ has been
considered as the ``exact'' solution in order to compute the
numerical errors given by
$$ E^{hk}=\displaystyle \max_{0\leq n\leq N} \left\{ \|\bu_n-\bu_n^{hk}\|_V+
\|v_n-v_n^{hk}\|_H+\|\varphi_n-\varphi_n^{hk}\|_W\right\}.  $$ Both
the piezoelectric and permittivity coefficients are depicted in
Table \ref{piezct}. Moreover the following data have been employed
in the simulations:
$$\begin{array}{l}
T= 1 \, s,\;\; \fb_0(\bx,t)=\bzero \; N/m^3,\;\; \fb_F(\bx,t)=\left\{\begin{array}{l}
(0,-60(1-x_1)t) \; N/m^2 \; \text{ if $x_2$=1,}\\
\bzero \;\: N/m^2 \quad \text{elsewhere,}
\end{array}\right. \\
E=20000 \; \; N/m^2, \quad  r=0.3, \quad c_p=10^5, \quad \rho=1 \; kg/m^3,\\
\varphi_A=0 \; \; V, \quad q_0=0\;\; C/m^3, \quad q_F=0 \; C/m^2,\\
\bu_0=\bzero \;\; m, \quad \bv_0=\bzero\;\; m/s, \quad
\varphi_0=0\;\; V.
\end{array}$$
In Table \ref{tt_el} the numerical errors obtained for some
discretization parameters $N_{el}$ and $k$ are shown. As can be
seen, the convergence of the numerical algorithm is clearly
observed.  The evolution of the error with respect to the parameter
$k+h$ is plotted in Figure \ref{f-recta} (here,
$h=\frac{\sqrt{2}}{N_{el}}$). The linear convergence of the
algorithm seems to be achieved.

\begin{table}
\begin{center}
\begin{tabular}{|c | c c c c c c c |}
\hline $N_{el} \downarrow k \to $ & 0.0015625 &  0.003125 &   0.00625 &   0.0125 &   0.025 &   0.05 &   0.1  \\
\hline
 4  &  0.470744  & 0.470809 &  0.470941 &  0.471220 &  0.471838 &  0.473329 &  0.477455 \\
  8  & 0.255173  & 0.255208  & 0.255284  & 0.255468  & 0.255957  & 0.257490  & 0.263109 \\
 16  & 0.141647  & 0.141660  & 0.141698 &  0.141839 &  0.142445 &  0.144762 &  0.152973 \\
 32 &  0.080098  & 0.080101 &  0.080149 &  0.080420 &  0.081371 &  0.084702 &  0.096829 \\
 64 &  0.045528 &  0.045547 &  0.045663  & 0.046057 &  0.047449 &  0.052511  & 0.069826 \\
128 &  0.025358 &  0.025405 &  0.025569 &  0.026165 &  0.028363  & 0.035951 &  0.058180 \\
256  & 0.012722  & 0.012791 &  0.013062 &  0.014099  & 0.017720 & 0.028208 &  0.053679 \\
\hline
\end{tabular}
\caption{Example 1: Numerical errors ($\times 100$) for some
$N_{el}$ and $k$.} \label{tt_el}
\end{center}
\end{table}

\begin{figure}[h]
\begin{center}
\includegraphics[width=7cm]{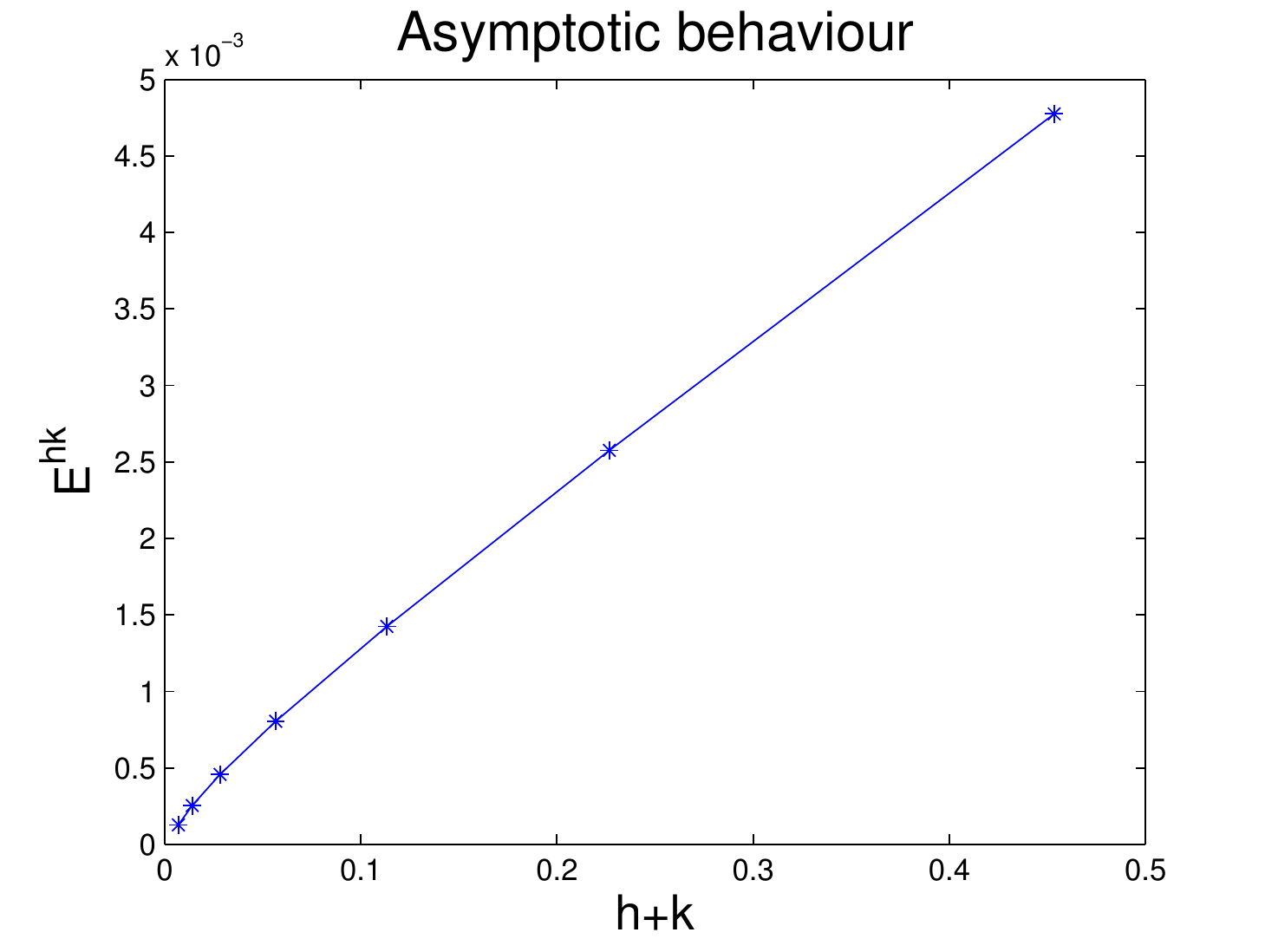}
\caption{Example 1: Asymptotic behaviour of the numerical scheme}
    \label{f-recta}
\end{center}
\end{figure}

\subsection{A second example: piezoelectric effect}

As a second numerical example, in order to observe the effect of the
piezoelectric properties of the material, a physical setting as the
one depicted in Fig. \ref{f-test2} is considered.
\begin{figure}[h]
\begin{center}
\includegraphics[scale=0.7]{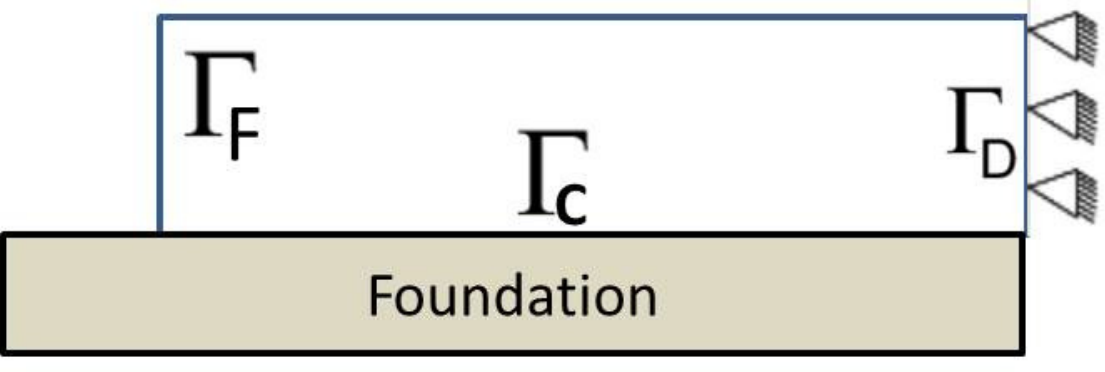}
\caption{Example 2: physical setting}
\label{f-test2}
\end{center}
\end{figure}

In this case the body $\Omega=[0,4]\times[0,1]$ is clamped on its
right end and it remains in initial contact with a deformable
foundation on its lower boundary. No physical forces act on the
body, but a constant surface  electric charge ($q_F=200\; C/M^2$) is
applied on the lower part of the boundary, where contact is
produced. Here, as in the previous example, we assume that
$\Gamma_D=\Gamma_A.$

The following data have been used
$$\begin{array}{l}
T= 1 \, s,\quad \fb_B(\bx,t)=\bzero \;\; N/m^3,\quad \fb_F=\bzero \;\: N/m^2,\\
E=2\times 10^6 \; \; N/m^2, \quad  r=0.3, \quad c_p=10^5,\quad \rho=1000 \; kg/m^3, \\
\varphi_A=0 \; \; V, \quad q_0=0\;\; C/m^3, \quad q_F=\left\{\begin{array}{l}
200\;\; C/m^3 \text{ if } x_2=0,\\
0 \quad \text{elsewhere},
\end{array}\right. \\
\bu_0=\bzero \;\; m, \quad \bv_0=\bzero \;\; m/s, \quad
\varphi_0=0\;\; V.
\end{array}$$

\begin{figure}[h]
\begin{center}
\includegraphics[scale=0.2]{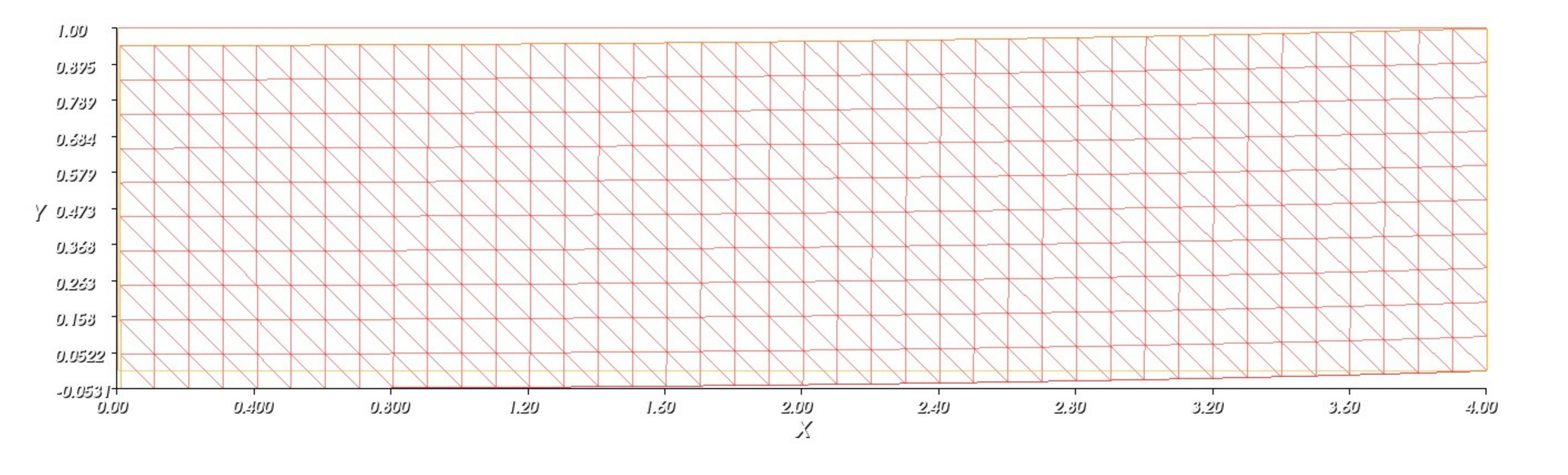}
\caption{Example 2: Deformed configuration (x 5000) at final time}
\label{f-def2}
\end{center}
\end{figure}

We can see  in Fig. \ref{f-def2} that deformations appear due to the
piezoelectric effect which, added to the mechanical restrictions,
lead the body to a stress-state which can be observed in Fig.
\ref{f-poten2} (right-hand side). In this figure (left-hand side),
the electric potential field  is shown at final time.
\begin{figure}[h]
\begin{center}
$\begin{array}{cc}
\includegraphics[scale=0.17]{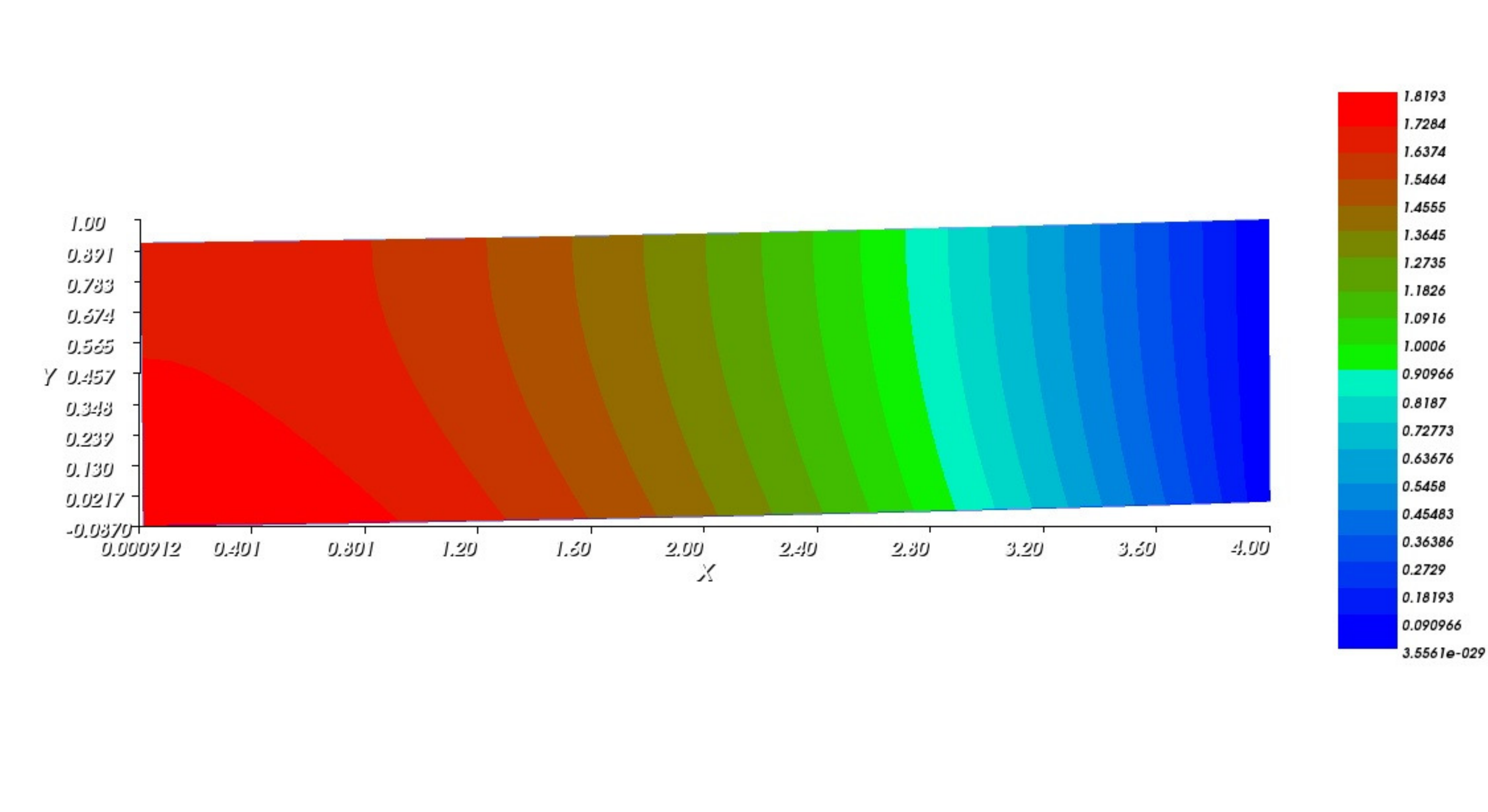}&
\includegraphics[scale=0.17]{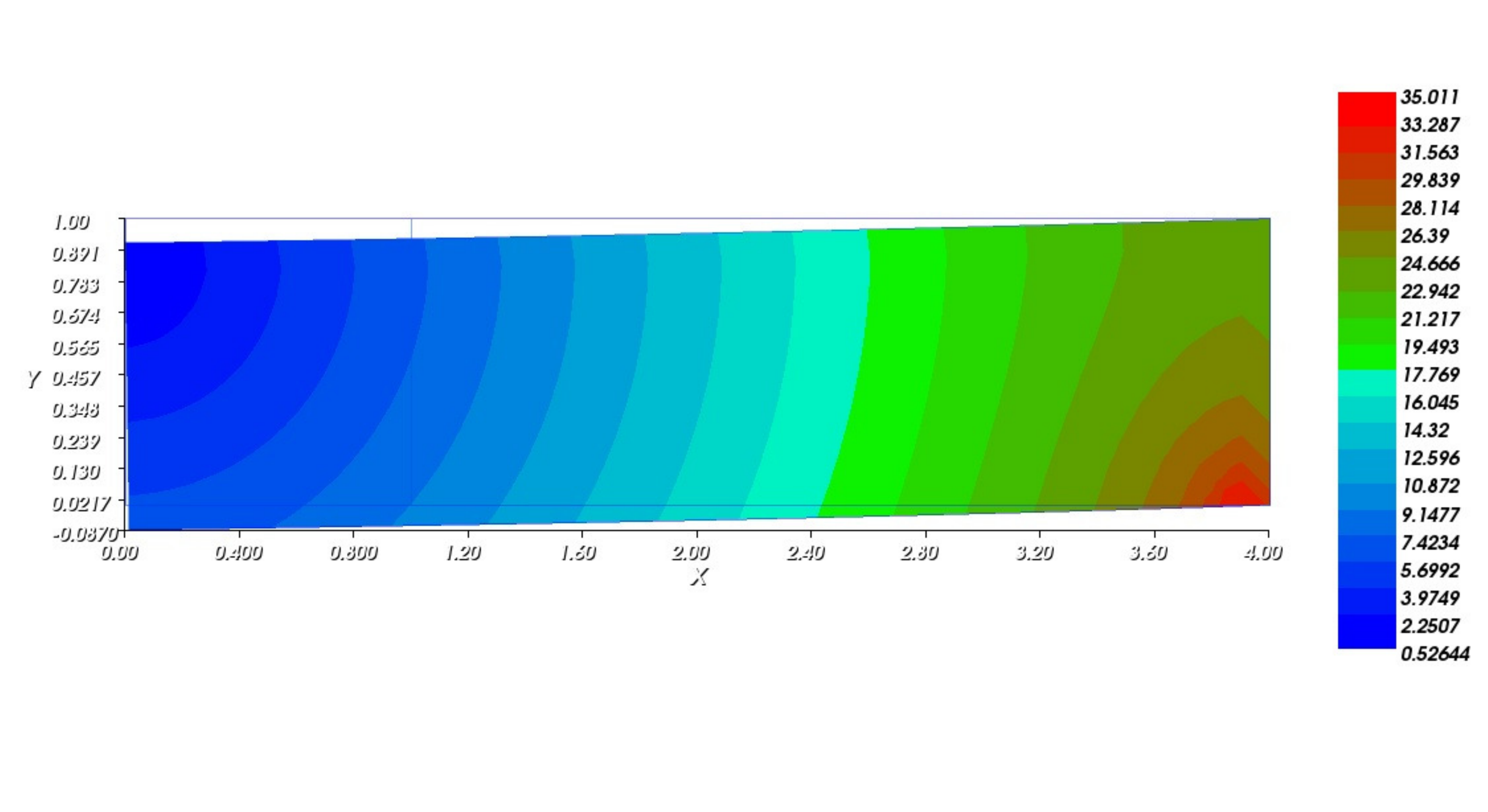}
\end{array}$
\caption{Example 2: potential field and von mises stress norm at final time.}
\label{f-poten2}
\end{center}
\end{figure}

\subsection{A third example: deformable contact of an L-shaped domain}

As a final example, we consider an L-shaped body which is submitted
to the action of traction forces on its upper horizontal boundary.
The body is clamped on its lower horizontally boundary and an
obstacle is assumed to be in initial contact, as can be observed in
Fig. \ref{f-def3}.

\begin{figure}[h]
\begin{center}
\includegraphics[scale=0.8]{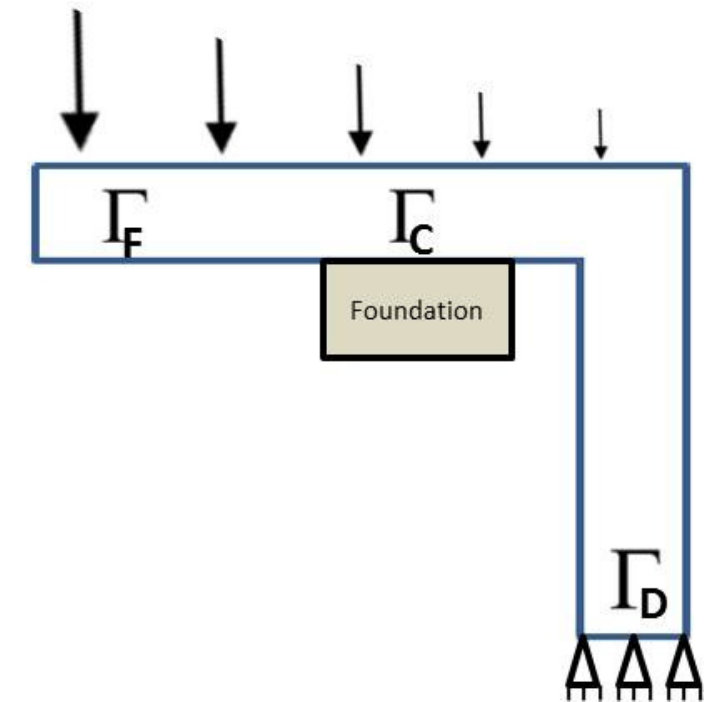}
\caption{Example 3: Contact problem of an L-shaped domain}
\label{f-def3}
\end{center}
\end{figure}

The following data are used in the simulation:
$$\begin{array}{l}
T= 1 \, s,\quad \fb_B(\bx,t)=\bzero \;\; N/m^3,\quad \fb_F=\left\{\begin{array}{l}
(0,-500(60-x_1)t) \; N/m^2 \; \text{ if $x_2$=50,}\\
\bzero \;\: N/m^2 \quad \text{elsewhere,}
\end{array}\right. \\
E=2.1\times 10^9 \; \; N/m^2, \quad  r=0.3, \quad c_p=10^5,\quad \rho=27000 \; kg/m^3, \\
\varphi_A=0 \; \; V, \quad q_0=0\;\; C/m^3, \quad q_F=0\;\; C/m^3, \\
\bu_0=\bzero \;\; m, \quad \bv_0=\bzero \;\; m/s, \quad
\varphi_0=0\;\; V.
\end{array}$$

Both electric potential and von Mises stress norm are plotted, over
the final configuration of the body and at final time, in Fig.
\ref{f-poten3}. The area of maximum stress concentration is located
near the contact boundary due to the bending movement, and it
coincides with the region where the electric potential reaches its
maximum value as it was expected.

\begin{figure}[h!]
\begin{center}
$\begin{array}{cc}
\includegraphics[scale=0.4]{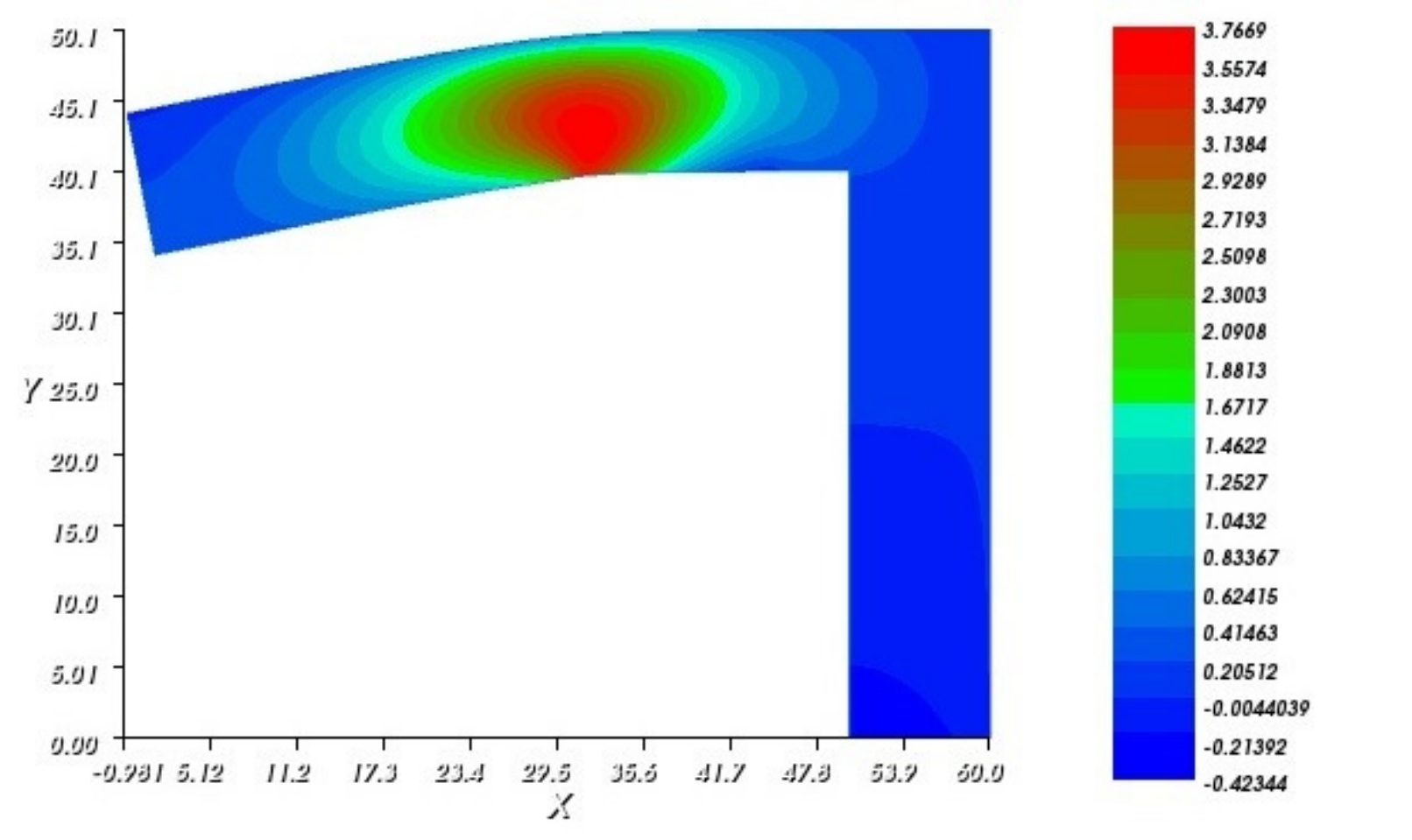} &
\includegraphics[scale=0.4]{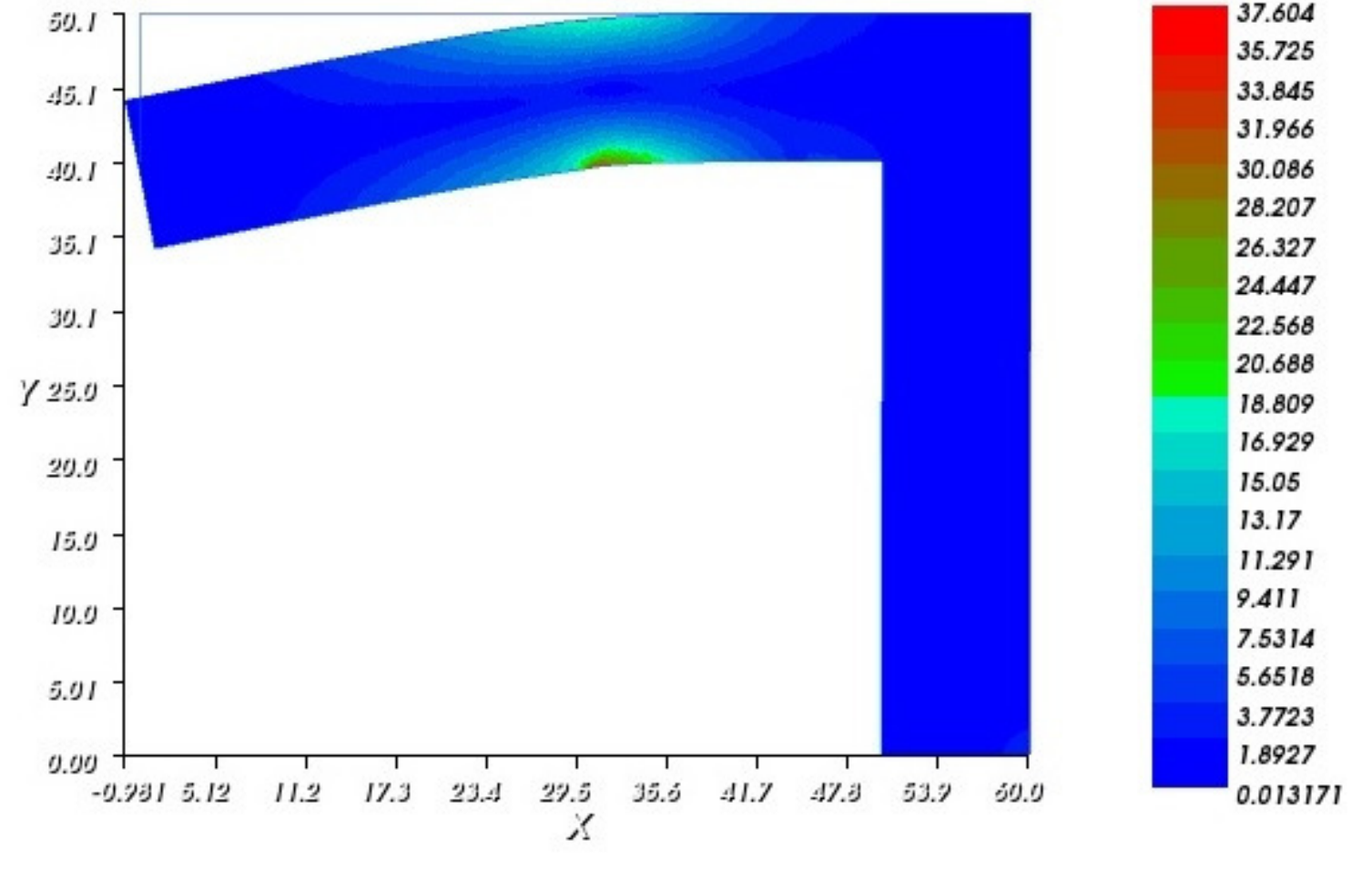}
\end{array}$
\caption{Example 3: potential field and von mises stress norm over the deformed mesh ($\times 500$) at final time.}
\label{f-poten3}
\end{center}
\end{figure}

\section*{Acknowledgements}

The work of M. Campo and J.R. Fern\'andez was supported by the
Ministerio de Econom\'ia y Competitividad under the research project
MTM2012-36452-C02-02 (with the participation of FEDER) and the work
of \'A. Rodr\'{\i}guez-Ar\'os and J.M. Rodr\'{\i}guez was supported
by the research project MTM2012-36452-C02-01 with the participation
of FEDER.

\end{document}